\newtheorem{tm}{Theorem}[section] 
\newtheorem{rk}{Remark}[section]
\newtheorem{prop}{Proposition}[section]
\newtheorem{lm}{Lemma}[section]
\newproof{proof}{Proof}
\numberwithin{figure}{section} 
\newcommand{\bi} {\mathbf i}
\newcommand{\E}{\mathbb E}
\newcommand{\PP}{\mathbb P}
\newcommand{\N}{\mathbb N}
\newcommand{\R}{\mathbb R}
\newcommand{\CC}{\mathcal C}
\newcommand{\FFF}{\mathscr F}
\begin{document}

\begin{frontmatter}

\title{Stochastic Symplectic and Multi-Symplectic Methods for Nonlinear Schr\"odinger Equation with White Noise Dispersion}

\author[cas]{Jianbo Cui\corref{cor}}
\ead{jianbocui@lsec.cc.ac.cn}

\author[cas]{Zhihui Liu}
\ead{liuzhihui@lsec.cc.ac.cn}

\author[cas]{Jialin Hong}
\ead{hjl@lsec.cc.ac.cn}

\author[nudt]{Weien Zhou}
\ead{weienzhou@nudt.edu.cn}

\cortext[cor]{Corresponding author.}

\address[cas]{Institute of Computational Mathematics and Scientific/Engineering Computing, Chinese Academy of Sciences, Beijing, 100190, China}

\address[nudt]{College of Science, National University of Defense Technology, Changsha 410073, China}

\begin{keyword}
Nonlinear Schr\"odinger equation\sep White noise dispersion\sep Stochastic symplectic and multi-symplectic structures\sep 
Stochastic symplectic and multi-symplectic schemes
\end{keyword}

\begin{abstract}
We indicate that the nonlinear Schr\"odinger equation with white noise dispersion possesses stochastic symplectic and multi-symplectic structures. Based on these structures, we propose the stochastic symplectic and multi-symplectic methods, which 
preserve the continuous and discrete charge conservation laws, respectively.
Moreover, we show that the proposed methods are convergent with temporal order one in probability. 
Numerical experiments are presented to verify our theoretical results.
\end{abstract}

\end{frontmatter}

\section{Introduction}

In dispersion-managed fibers, the random nonlinear Schr\"odinger equation
\begin{align}\label{l-nls}
&\bi\, du+\epsilon^{-1}m (t\epsilon^{-2})\Delta u\,dt+|u|^{2\sigma} u\,dt=0,\quad \epsilon>0,\quad \text{in} \quad\R_+\times \R
\end{align}
describes the electric field evolution (see \cite{Mar06}).
Here $\bi$ denotes the imaginary unit, $m$ is a real-valued centered stationary continuous random process, and the nonlinear term models the nonlinear response of the medium to the electric field. 
Under certain assumptions on $m$ and $\sigma$, it is proved in \cite{DD10} that the limit equation of \eqref{l-nls}, when $\epsilon$ goes to 0, is the nonlinear Schr\"odinger (NLS) equation with white noise dispersion: 
\begin{align}\label{s-nls}
&\bi\, du+\Delta u\circ dW(t)+|u|^{2\sigma} u \,dt=0,\quad \text{in} \quad\R_+\times \R 
,
\end{align}
where $W$ is a one-dimensional {\color{blue}real-valued} Brownian motion on a stochastic basis $(\Omega,\FFF,(\FFF_t)_{t\ge0},\PP)$.
Recently, many attentions have been paid to the studies of Eq. \eqref{s-nls} both theoretically and numerically (see \cite{BDD15, CD16, DD10, DT11}).
The local and global existence of the unique solution for Eq. \eqref{s-nls} is  established in \cite{DD10, DT11}.
To investigate the inherent nature of Eq. \eqref{s-nls}, authors in \cite{BDD15} numerically study Eq. \eqref{s-nls}  by a Crank-Nicolson scheme,
and obtain first order of convergence in probability for this scheme provided $u_0\in H^{s+10}$ with $s>\frac 12$.


As the qualitative features, such as conserved quantities, symplecticity and multi-symplecticity, play crucial roles in both theoretical and numerical analysis for deterministic NLS equations (see e.g \cite{HLMZ06,IKS01,Kato}), it is natural to wonder whether stochastic NLS equations possess these features.  For the stochastic NLS equation with a stochastic forcing 
\begin{align}\label{nlsf}
\bi\, du+\Delta u \,dt+\lambda |u|^{2\sigma}u\,dt+u\circ dW=0,
\end{align}
where $\lambda\in \R$ and $W$ is a infinite-dimensional Wiener process, 
\cite{DD03} presents the charge conservation law and the evolution of the energy which are used to study the well-posedness of the solution.
\cite{CH16} and \cite{JWH13}  propose the stochastic symplectic  and  multi-symplectic structures  for Eq. \eqref{nlsf}, respectively, which characterize the geometric invariants of the phase flow. 
To the best of our knowledge, there has been little work concerning the geometric structures of Eq. \eqref{s-nls}.
In this paper, we show that Eq. \eqref{s-nls}  possesses both stochastic symplectic and multi-symplectic structures. In particular, the 
stochastic multi-symplectic conservation law for Eq. \eqref{s-nls}, different from \cite{JWH13}, is obtained via considering the stochastic dispersion. 

Since the analytic solutions of stochastic partial differential equations (PDEs) are rarely known, numerical approximations have become an important tool to investigate the behaviors of the solutions. In order to guarantee the reliability and effectiveness of numerical solutions, especially for longtime simulations, we need numerical methods  preserving the intrinsic properties of the original problems as much as possible.  For stochastic Hamiltonian systems, some important results in constructing stochastic symplectic and multi-symplectic integrators  have been obtained recently (see \cite{CH16, CHZ16, HJZ14, MRT02a, MRT02b} and references therein), which motivates us to develop the corresponding numerical methods for Eq. \eqref{s-nls}.
To inherit stochastic symplectic structure for Eq. \eqref{s-nls}, 
we present two temporal semi-discretizations via the mid-point scheme and the Lie splitting scheme, respectively.
These proposed schemes are shown to possess both the charge conservation law and stochastic symplectic structure. 
 Furthermore, thanks to the  unitarity and decay estimates of the stochastic semigroup generated by the linear operator in Eq. \eqref{s-nls}, we deduce mean square order one of these methods for the truncated equation. It then yields 
 order one in probability for the proposed methods approximating the original problem,
i.e., for any fixed $T>0$ and 
$N\in \N^+$ (for details see Theorem \ref{pod}),
\begin{align*}
\lim_{M\to \infty}\mathbb P \Big(\|u(t_n)-u^n\|_{H^s}
\ge M\frac{T}N\Big)=0,\quad 0\leq n\leq N,
\end{align*}
provided $u_0\in H^{s+10}$ for the mid-point scheme (see \eqref{mid-nls}) and 
$u_0\in H^{s+4}$ for the Lie splitting scheme 
(see \eqref{nspl}) with $s>\frac 12$, where $u^n$ is the corresponding numerical solution.
Similar convergence results for Eq. \eqref{nlsf} by other schemes have been  acquired in  \cite{ CHA16, DD06, Liu13} and references therein.
To consider the stochastic multi-symplectic structure, we propose a full-discretization by applying  the spatial centered difference to the temporal mid-point scheme, which is proven to preserve the discrete stochastic multi-symplectic conservation law and charge  conservation law.

The rest of the paper is organized as follows. 
Some preliminaries are given in Section \ref{SSMS}, and we show that Eq. \eqref{s-nls} owns
 the stochastic multi-symplectic conservation law and the stochastic symplectic structure.
In Section \ref{num}, we consider two stochastic symplectic schemes, including the mid-point scheme and  the Lie splitting scheme, and a stochastic multi-symplectic scheme, which preserve the charge conservation law.
Then we prove that their temporal order of convergence is one in probability.
In Section \ref{exp}, we perform numerical experiments to verify our theoretical results and analyze the longtime behaviors of the numerical solutions of Eq. \eqref{s-nls}. 
Finally, we give conclusions in Section \ref{con}.

\section{Stochastic Symplectic and Multi-symplectic Structures}
\label{SSMS}

The existence of a unique solution of Eq. \eqref{s-nls} has been established in \cite{DD10} when $\sigma<2$, and in \cite{DT11} with $\sigma=2$ included.
These results can be extended to a general Sobolev space $H^s$ with $s\geq 1$, i.e., Eq. \eqref{s-nls} has a unique solution $u$ in $\CC(\R_+;H^s)$ if the initial value $u_0\in H^s$, owing to the isometry of stochastic semigroup $ S(t,r)$ in $H^s$.
The mild solution of  Eq. \eqref{s-nls} is 
\begin{align}\label{s-mild}
u(t)=S(t,0)u_0
+\bi \int_0^t S(t,s)|u(s)|^{2\sigma}u(s)\,ds,
\end{align}
where $S$ is the stochastic unitary semigroup generalized by 
$\bi du+\Delta u\circ dW(t)=0$, i.e., for $0\leq s< t \le T<\infty$ and $x\in \R$,
\begin{align}\label{ss}
&S(t,s)\phi(s,x)
:=\frac{1}{\sqrt{4\pi \bi \,(W(t)-W(s))}}
\int_{\R} \exp\left( \bi\,\frac{|x-y|^2}{4(W(t)-W(s))} \right) \phi(s,y)\,dy,
\end{align}
for any $\phi(s,\cdot)\in L^1(\R)$.
Throughout the paper, we assume that $\sigma\leq 2$ and is an integer  so that Eq. \eqref{s-mild} is well-posed and the nonlinear term is sufficiently differentiable.

Just like Eq. \eqref{nlsf}, the charge conservation law is available for  Eq. \eqref{s-nls}. However, on account of white noise dispersion, the evolution of energy is entirely different from the cases in \cite{CH16,DD04,DD06,JWH13}. 

\begin{prop}\label{ccl}
\cite{BDD15}
Eq. \eqref{s-nls} possesses the charge conservation law, i.e.,
\begin{align*}
Q(u(t))= Q(u_0),\quad t\ge0, \quad \text{a.s.}
\end{align*}
where $Q(u(t)):=\int_{\R}|u(t)|^2 \,dx$.
\end{prop}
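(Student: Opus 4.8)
The plan is to apply the Stratonovich chain rule—which obeys the ordinary Leibniz and chain rules, with no It\^o correction—to the functional $Q(u)=\int_{\R}|u|^2\,dx=\int_{\R}u\bar u\,dx$. Writing Eq. \eqref{s-nls} in the differential form $du=\bi\,\Delta u\circ dW+\bi\,|u|^{2\sigma}u\,dt$ and taking complex conjugates (using that $W$ is real-valued, so conjugation commutes with the Stratonovich integral, $\overline{dW}=dW$) gives $d\bar u=-\bi\,\Delta\bar u\circ dW-\bi\,|u|^{2\sigma}\bar u\,dt$, since $\overline{\bi}=-\bi$.

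First I would compute the pointwise differential of $|u|^2$ via the Stratonovich product rule:
\begin{align*}
d(u\bar u)=(du)\,\bar u+u\,(d\bar u)=\bi\,(\bar u\,\Delta u-u\,\Delta\bar u)\circ dW+\bi\,|u|^{2\sigma}(u\bar u-u\bar u)\,dt.
\end{align*}
The drift term vanishes identically because the gauge-type nonlinearity contributes $\bi\,|u|^{2\sigma}(|u|^2-|u|^2)=0$; this is precisely the cancellation responsible for charge conservation in the deterministic NLS, and it is untouched by the noise since the dispersion carries the only stochastic integrator. Next I would integrate over $\R$ and integrate by parts: for $u(t)\in H^s$ with $s\ge1$ the boundary terms at infinity vanish, and
\begin{align*}
\int_{\R}\bar u\,\Delta u\,dx=-\int_{\R}\nabla\bar u\cdot\nabla u\,dx=\int_{\R}u\,\Delta\bar u\,dx,
\end{align*}
so the surviving stochastic integrand $\int_{\R}(\bar u\,\Delta u-u\,\Delta\bar u)\,dx$ is zero. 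Hence $dQ(u(t))=0$, and integrating in time yields $Q(u(t))=Q(u_0)$ almost surely.

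The main obstacle is rigor rather than algebra: one must justify the Stratonovich chain rule applied to the $L^2$-norm functional and, in particular, the interchange of the spatial integral with the stochastic differential. I would handle this by first establishing the identity on a smooth approximation—for instance a spectral Galerkin projection, or the mild solution \eqref{s-mild} with the nonlinearity truncated—where the unitarity of the semigroup $S(t,s)$ is transparent. Indeed, from \eqref{ss} the operator $S(t,s)$ acts in Fourier space as multiplication by $e^{-\bi\,|\xi|^2(W(t)-W(s))}$, which has modulus one because $W$ is real, so $\|S(t,s)\phi\|_{L^2}=\|\phi\|_{L^2}$; this already settles conservation for the linear flow and furnishes the uniform bounds needed to pass to the limit. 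The passage to the full nonlinear solution then uses the $H^s$ well-posedness recalled above, and the requirement $s\ge1$ is exactly what makes the integration by parts and the pairing $\langle\Delta u,u\rangle$ meaningful.
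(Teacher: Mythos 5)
Your proposal is correct: the formal Stratonovich computation (drift cancellation from the gauge structure of the nonlinearity, vanishing of the stochastic integrand after integration by parts, unitarity of $S(t,s)$ as a Fourier multiplier of modulus one) is exactly the standard argument for this conservation law. Note that the paper itself offers no proof of Proposition \ref{ccl} -- it is quoted from \cite{BDD15} -- so there is no in-paper proof to diverge from; your sketch matches the approach used in that reference and in the well-posedness literature the paper builds on. The only part that remains schematic is the rigor step you yourself flag: a fully detailed proof would either pass to the It\^o form, where the correction term $-\tfrac12\Delta^2 u\,dt$ cancels against the quadratic-variation contribution $\|\Delta u\|_{L^2}^2\,dt$ in the It\^o formula (requiring extra regularity or a regularization of the solution), or carry out the Galerkin/truncation limit you outline; either route closes the gap.
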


\begin{rk}\label{ene}
By It\^o formula, it is not difficult to show that the energy 
$$H(u):=\frac 12\int_{\R}|\nabla u|^2\,dx-\frac 1{2\sigma+2}\int_{\R}|u|^{2\sigma+2}\,dx$$
has the following evolution:
\begin{align*}
H(u(t))&=H(u_0)+ \text{Im}\ \int_0^t\int_{\R} \Delta \bar u(s) |u(s)|^{2\sigma}u(s) \,dx ds  \\
&\quad-\text{Im}\ \int_0^t\int_{\R} \Delta \bar u(s) |u(s)|^{2\sigma}u(s) \,dx \circ dW(s),\quad t\geq 0,\quad \text{a.s.} 
\end{align*}
\end{rk}

As we all know, the deterministic NLS equation, i.e., with $W(t)$ in Eq. \eqref{s-nls} replaced by $t$, is an infinite-dimensional Hamiltonian system and a Hamiltonian PDE (see e.g. \cite{HLMZ06} and references therein), which characterizes the geometric invariants of the phase flow and help to construct the numerical schemes for long time computation.
For Eq. \eqref{nlsf}, its stochastic symplectic and multi-symplectic structures have been analyzed in \cite{CH16} and \cite{JWH13}, respectively. In the following, we show that, besides the charge conservation law, Eq. \eqref{s-nls} possesses other geometric invariants, including stochastic symplectic and multi-symplectic structures.

Denote by $p$ and $q$ the real and imaginary parts of $u$, respectively. 
Then Eq. \eqref{s-nls} is equivalent to 
\begin{align*}
&d_tp=-\Delta q\circ dW-|p^2+q^2|^{\sigma}q\,dt,\\
&d_tq=\phantom{-}\Delta p\circ dW+|p^2+q^2|^{\sigma}p\,dt,
\end{align*}
with initial datum $(p(0), q(0))=(p_0,q_0)$.
Set 
\begin{align*}
H_1:=-\frac 12\int_{\R}|\nabla u|^2 \,dx,\quad
H_2:=\frac 1{2\sigma+2}\int_{\R}|u|^{2\sigma+2}\,dx.
\end{align*}
Then the above equation can be rewritten as 
\begin{align}\label{h-nls}
\begin{split}
&d_tp=-\frac {\boldsymbol{\delta} H_1}{\boldsymbol {\delta} q}\circ dW-\frac {\boldsymbol{\delta} H_2}{\boldsymbol{\delta }q}\,dt,\\
&d_tq=\phantom{-}\frac {\boldsymbol{\delta} H_1}{\boldsymbol{\delta} p}\circ dW+\frac {\boldsymbol{\delta} H_2}{\boldsymbol{\delta} p}\,dt,
\end{split}
\end{align}
where $\frac {\boldsymbol{\delta} H_i}{\boldsymbol {\delta} p}$, $\frac {\boldsymbol{\delta} H_i}{\boldsymbol {\delta} q}$ denote the variational derivatives of $H_i$ with respect to $p$ and $q$, $i=1, 2$.
In fact, Eq. \eqref{h-nls} is an infinite-dimensional stochastic Hamiltonian system with Hamiltonians $H_1$ and $H_2$.
One can use the same skill as \cite{CH16} to deduce that Eq. \eqref{h-nls} preserves the stochastic symplectic structure 
\begin{align}\label{sss}
\bar \omega:= \int_{\R} dp\wedge dq \, dx,\quad \text{a.s.}
\end{align}
Alternatively, we prove that Eq. \eqref{s-nls} is a stochastic Hamiltonian PDE, from which we conclude that Eq. \eqref{s-nls} preserves the stochastic symplectic structure \eqref{sss}.
We remark that this argument can also be applied to Eq. \eqref{s-nls} with homogenous Dirichlet or periodic boundary condition.

For skew-symmetric matrices $ M, K$ and a smooth function $S$, the  deterministic $d$-dimensional Hamiltonian PDE 
\begin{align*}
 Mz_t+ Kz_x=\nabla S(z),
\end{align*}
has the multi-symplectic conservation law
\begin{align*}
\partial_t \omega +\partial_x \kappa=0,
\end{align*} 
where $\omega=\frac 12 dz\wedge Mdz$, $\kappa =\frac 12dz\wedge Kdz.$
This law states that the temporal and spatial symplectic structures change locally and synchronously.  

In order to analyze the stochastic multi-symplectic structure, we introduce the state variable $z =(p,q,v,w)^T$, where $v=p_x$ and $w=q_x$. 
Then Eq. \eqref{s-nls} can be transformed into the compact form 
\begin{align}\label{ms-nls}
Md_tz+Kz_x\circ dW=\nabla_z S_1(z)\,dt+\nabla_z S_2(z)\circ dW,
\end{align}
where
\begin{align*}
M=\left(\begin{array}{cccc}0 & -1 & 0 & 0\\1 & 0 & 0 & 0\\0 & 0 & 0 & 0 \\0 & 0 & 0 & 0\end{array}\right),
\quad K=\left(\begin{array}{cccc}0 & 0 & 1 & 0\\0 & 0 & 0 & 1\\-1 & 0 & 0 & 0 \\0 & -1 & 0 & 0\end{array}\right),\\
\end{align*}
and
\begin{align*}
S_1(z)=-\frac 1{2\sigma+2}\left(p^2+q^2\right)^{\sigma+1},\quad S_2(z)=-\frac 12(v^2+w^2).
\end{align*}
The following result shows that Eq. \eqref{ms-nls} preserves the stochastic multi-symplectic conservation law a.s. and thus we call it a stochastic multi-symplectic Hamiltonian system,
which generalizes the scopes of stochastic multi-symplectic Hamiltonian PDE in \cite{JWH13},

\begin{tm}\label{ts-cl}
Eq. \eqref{ms-nls} preserves the stochastic multi-symplectic conservation law locally, i.e.,
\begin{align*}
d_t \omega(t,x) +\partial_x \kappa(t,x)\circ dW(t)=0,\quad \text{a.s.}
\end{align*}
In other words,
\begin{align}\label{es-cl}
\omega(t_1,x)-\omega(t_0,x)=-\int_{t_0}^{t_1}\partial _x \kappa(t,x)\circ dW(t),\quad  \text{a.s.}
\end{align}
\end{tm}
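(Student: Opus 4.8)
The plan is to follow the standard variational (differential-form) argument for multi-symplectic conservation laws, carried out in the Stratonovich setting so that the classical Leibniz rule stays valid. First I would apply the phase-space exterior derivative $d$ (the differential with respect to the variational variables, which commutes with $d_t$ and $\partial_x$) to the compact form \eqref{ms-nls}. Since $\nabla_z S_i$ differentiates to the Hessian $\nabla_z^2 S_i$, this produces the variational equation
\begin{align*}
M\, d_t(dz)+K (dz)_x\circ dW=\nabla_z^2 S_1(z)\,dz\,dt+\nabla_z^2 S_2(z)\,dz\circ dW.
\end{align*}
Here one should first record that $d$ commutes with both $d_t$ and $\partial_x$, which is what lets the variational equation be obtained term by term.

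Next I would take the wedge product of $dz$ with this equation. Because the Hessians $\nabla_z^2 S_1$ and $\nabla_z^2 S_2$ are symmetric, the one-form identity $dz\wedge S\,dz=0$ valid for any symmetric matrix $S$ (the diagonal terms vanish and the off-diagonal terms cancel in pairs by antisymmetry of $\wedge$) annihilates the entire right-hand side, leaving
\begin{align*}
dz\wedge M\,d_t(dz)+\big(dz\wedge K (dz)_x\big)\circ dW=0.
\end{align*}
Then I would use the skew-symmetry of $M$ and $K$ to rewrite the two terms as exact differentials of $\omega$ and $\kappa$. For skew-symmetric $M$ one has $a\wedge Mb=b\wedge Ma$ for one-forms $a,b$, whence $\tfrac12 d_t(dz\wedge M\,dz)=dz\wedge M\,d_t(dz)$, i.e. $d_t\omega=dz\wedge M\,d_t(dz)$, and analogously $\partial_x\kappa=dz\wedge K (dz)_x$. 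Substituting yields $d_t\omega+\partial_x\kappa\circ dW=0$ a.s., and integrating in time from $t_0$ to $t_1$ gives the stated integral form \eqref{es-cl}.

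The main obstacle, and the only place where the stochastic nature genuinely enters, is justifying $d_t\omega=dz\wedge M\,d_t(dz)$, i.e. that the product/chain rule used in the deterministic computation survives with no extra correction term. This is exactly where the Stratonovich (rather than It\^o) formulation is essential: Stratonovich differentials obey the classical Leibniz rule, so $d$ commutes with $d_t$ and the manipulation above is legitimate; I would emphasize that the same computation performed with It\^o differentials would generate a quadratic-variation correction and destroy the clean conservation law. Everything else is the symmetry bookkeeping above, so once this point is settled the theorem follows.
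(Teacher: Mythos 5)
Your argument is correct and follows essentially the same route as the paper's proof: take the phase-space exterior derivative of \eqref{ms-nls}, use that it commutes with $d_t$ and $\partial_x$, wedge with $dz$ so the symmetric Hessians $\nabla_{zz}S_1$ and $\nabla_{zz}S_2$ annihilate the right-hand side, and then use the skew-symmetry of $M$ and $K$ to recognize $d_t\omega$ and $\partial_x\kappa$. Your explicit remark that the Stratonovich formulation is what preserves the classical Leibniz rule (and hence the commutation of $d$ with $d_t$) is a useful elaboration of a point the paper handles by citation, but it does not change the method.
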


\begin{proof}
Taking the exterior differential on the phase space to Eq. \eqref{ms-nls}, 
we obtain
\begin{align*}
Md(d_tz)+Kdz_x\circ dW=\nabla_{zz} S_1(z) dz dt+\nabla_{zz} S_2(z) dz \circ dW.
\end{align*}
The fact that exterior differential commutes with the stochastic differential and the spatial differential (see e.g. \cite{JWH13}) yields that 
\begin{align*}
Md_t dz+K\partial_x dz\circ dW=\nabla_{zz} S_1(z) dz dt+\nabla_{zz} S_2(z) dz \circ dW.
\end{align*}
Then by wedging $dz$ on the above equation, we have
\begin{align*}
dz\wedge Md_t dz+dz\wedge K\partial_x dz\circ dW=dz\wedge \nabla_{zz} S_1(z) dz dt+dz \wedge \nabla_{zz} S_2(z) dz \circ dW.
\end{align*}
The skew-symmetry of $M$ shows that 
\begin{align*}
\omega(t_1,x)-\omega(t_0,x)
=\int_{t_0}^{t_1} d_t \left(\frac 12 dz\wedge Mdz\right) \,dt
= \int_{t_0}^{t_1} dz\wedge M d_tdz\, dt,
\end{align*}
from which and the symmetry of $\nabla_{zz} S_1(z)$ and $\nabla_{zz} S_2(z)$ we have
\begin{align*}
\omega(t_1,x)-\omega(t_0,x)
&=- \int_{t_0}^{t_1} dz\wedge K\partial_x dz\circ dW
+ \int_{t_0}^{t_1} dz\wedge \nabla_{zz} S_1(z) dz dt\\
&\quad+ \int_{t_0}^{t_1}  dz \wedge \nabla_{zz} S_2(z) dz \circ dW\\
&=- \int_{t_0}^{t_1} dz\wedge K\partial_x dz\circ dW.
\end{align*}
Then we obtain Eq. \eqref{es-cl} by the skew-symmetry of $K$.\qed
\end{proof}

\begin{rk}
\begin{enumerate}

\item
We can repeat the above proof to show that Eq. \eqref{l-nls} is equivalent to the  Hamiltonian PDE
\begin{align*}
 Mz_t+K\epsilon^{-1}m(t\epsilon^{-2})z_x\,dt
=\nabla_z S_1(z)\,dt+{\epsilon}^{-1}m(t\epsilon^{-2})\nabla_z S_2(z)\,dt,
\end{align*}
and that this equation possesses a multi-symplectic conservation law locally.

\item
Under the assumption that the process $t \to \epsilon \int_0^{t\epsilon^{-2}}m(s)ds$ converges in distribution to $W$ in $\CC([0,T])$
as $\epsilon \to 0$, 
the stochastic multi-symplectic conservation law \eqref{es-cl} of Eq. \eqref{ms-nls} is the limit of the multi-symplectic conservation law 
\begin{align*}
\partial_t \omega+
\epsilon^{-1}m(t\epsilon^{-2})\,\partial_x \kappa =0
\end{align*}
of \eqref{l-nls} in distribution. 
\end{enumerate}
\end{rk}
For the high dimensional NLS equation with white noise dispersion, we can deduce its
corresponding stochastic multi-symplectic conservation law by  the above procedures.
The preservation of the stochastic symplectic structure of Eq. \eqref{s-nls} follows immediately from the above theorem. 

\begin{tm}\label{sy-st}
The phase flow of stochastic NLS \eqref{s-nls} preserves the stochastic  symplectic structure \eqref{sss}.
\end{tm}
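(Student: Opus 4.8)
The plan is to deduce the global symplectic conservation law by integrating the local stochastic multi-symplectic conservation law of Theorem \ref{ts-cl} over the whole spatial domain $\R$. First I would make the relation between $\omega$ and $\bar\omega$ explicit: with $z=(p,q,v,w)^T$ and the matrix $M$ above, a direct computation gives $M\,dz=(-dq,dp,0,0)^T$, so that
\begin{align*}
\omega=\frac12\,dz\wedge M\,dz=-\,dp\wedge dq.
\end{align*}
Hence $\int_{\R}\omega(t,x)\,dx=-\bar\omega(t)$, and it suffices to show that $\int_{\R}\omega(t,x)\,dx$ is independent of $t$ almost surely.

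Next I would integrate the identity $d_t\omega+\partial_x\kappa\circ dW=0$ of Theorem \ref{ts-cl} over $x\in\R$. Exchanging the spatial integral with the temporal (Stratonovich) differential yields
\begin{align*}
d_t\!\int_{\R}\omega(t,x)\,dx=-\Big(\int_{\R}\partial_x\kappa(t,x)\,dx\Big)\circ dW(t).
\end{align*}
The spatial integrand on the right is a total $x$-derivative, so by the fundamental theorem of calculus it equals $\kappa(t,+\infty)-\kappa(t,-\infty)$, which vanishes because the solution $u$, and hence the variations $dz$ entering $\kappa=\frac12\,dz\wedge K\,dz$, decay at infinity: for $u_0\in H^s$ with $s>\frac12$ the Sobolev embedding guarantees the needed decay, and the same cancellation holds under periodic or homogeneous Dirichlet boundary conditions. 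Consequently $d_t\int_{\R}\omega\,dx=0$, so $\int_{\R}\omega\,dx$, and thus $\bar\omega$, is preserved almost surely.

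The main obstacle is the rigorous justification of the two interchanges used above, namely swapping $\int_{\R}(\cdot)\,dx$ with the exterior differential and with the Stratonovich integral against $dW$, together with the vanishing of the boundary term. This requires sufficient integrability and spatial decay of $z$ and its first variation $dz$, uniform on the relevant time interval, which one obtains from the well-posedness of Eq. \eqref{s-nls} in $H^s$ and the isometry of the stochastic semigroup $S(t,s)$. Once these are in place the conclusion is immediate, in agreement with the remark that the statement follows at once from Theorem \ref{ts-cl}.
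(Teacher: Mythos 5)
Your proposal is correct and takes essentially the same route as the paper's proof: spatially integrate the local conservation law of Theorem \ref{ts-cl} over $\R$, observe that the right-hand side is a total $x$-derivative (the paper writes $\kappa=dp\wedge dp_x+dq\wedge dq_x$ explicitly), and use the decay of the solution guaranteed by well-posedness to make the boundary contribution vanish. The only addition is your explicit identification $\omega=\frac12\,dz\wedge M\,dz=-\,dp\wedge dq$, which the paper leaves implicit and which only affects an irrelevant sign.
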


\begin{proof}
Spatially integrating the multi-symplectic conservation law \eqref{es-cl}, we obtain
\begin{align*}
\int_{\R} \omega(t_1,x)-\omega(t_0,x) \,dx= -\int_{t_0}^{t_1}\int_{\R}\partial _x \kappa(t,x)\,dx\circ dW(t).
\end{align*}
The form of $\kappa$ yields that 
\begin{align*}
\kappa(t,x)=dp \wedge dp_x+dq \wedge dq_x.
\end{align*}
Plugging the above equality into $-\int_{t_0}^{t_1}\int_{\R}\partial _x \kappa(t,x)dx\circ dW(t)$, we get 
\begin{align*}
-\int_{t_0}^{t_1}\int_{\R}\partial _x \kappa(t,x)dx\circ dW(t)
&= \int_{t_0}^{t_1}\int_{\R}\partial_x(dp_{x}\wedge dp+dq_{x}\wedge dq)\,dx\circ dW(t).
\end{align*}
The well-posedness of Eq. \eqref{s-nls} yields that $\kappa(t,\cdot)$ vanishes (see e.g. \cite{CH16}).
Then  we obtain
\begin{align*}
\int_{\R} \omega(t_1,x)\,dx=\int_{\R}\omega(t_0,x) \,dx.
\end{align*}
This completes the proof.
\qed
\end{proof}

\section{Stochastic Symplectic and Multi-symplectic Schemes}
\label{num}

The basic idea for designing the numerical schemes is to make them preserve the 
properties of the real solution as much as possible. In this Section, we give two temporal stochastic symplectic schemes and a fully discrete stochastic multi-symplectic scheme.

Let $0=t_0<t_1<\cdots<t_N=T$ be a equidistant division of $[0,T]$ with time step $\delta t :=\frac TN$ and $\delta W_n:=W(t_{n+1})-W(t_n)$, $n\in\{0,1,\cdots,N-1\}$, be the increments of the Brownian motion.

\subsection{Stochastic Symplectic Schemes}

We start with the stochastic temporal symplectic schemes for Eq. \eqref{s-nls} including the mid-point scheme and the Lie splitting scheme.
%

We first consider the mid-point scheme for Eq. \eqref{s-nls}:
\begin{align}\label{mid-nls}
&\bi\,\frac {u^{n+1}-u^n}{\delta t}+\frac {\delta W_n}{\delta t} \Delta u^{n+\frac12}+g(u^{n+\frac12})=0,
\end{align}
where 
\begin{align*}
u^{n+\frac12}=\frac12(u^n+u^{n+1}) \quad \text {and}\quad
g(u^{n+\frac12})=|u^{n+\frac12}|^{2\sigma}u^{n+\frac12}.
\end{align*}
The scheme \eqref{mid-nls} preserves the stochastic symplectic structure 
\begin{align}\label{dsss}
\bar \omega^n:=\int_{\R}dp^{n}\wedge dq^{n}\,dx \quad \text{a.s.}
\end{align}
and the  charge conservation law.

\begin{prop}
The mid-point scheme \eqref {mid-nls} preserves the stochastic symplectic structure \eqref{dsss} and charge conservation law, i.e.,
\begin{align*}
\bar \omega^{n+1}=\bar \omega^{n},\quad 
Q(u^{n+1})=Q(u^n),\quad 0\le n \le N-1, \quad  \text{a.s.}
\end{align*}
\end{prop}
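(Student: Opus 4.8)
The plan is to establish the two conservation properties independently, since they rest on different algebraic features of the mid-point rule: the charge law follows from an imaginary-part identity, while the symplecticity mirrors the exterior-calculus argument of Theorem \ref{ts-cl}. Throughout I write $p^n=\text{Re}\,u^n$, $q^n=\text{Im}\,u^n$, and use the real Hamiltonian form \eqref{h-nls} of the scheme.

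For the charge conservation law I would take the complex $L^2$ inner product of \eqref{mid-nls} with $u^{n+\frac12}$, that is, multiply by $\bar u^{n+\frac12}$ and integrate over $\R$. Integration by parts gives $\int_{\R}\Delta u^{n+\frac12}\,\bar u^{n+\frac12}\,dx=-\int_{\R}|\nabla u^{n+\frac12}|^2\,dx\in\R$, and $\int_{\R}g(u^{n+\frac12})\bar u^{n+\frac12}\,dx=\int_{\R}|u^{n+\frac12}|^{2\sigma+2}\,dx\in\R$, so both the dispersive and the nonlinear contributions are real. Taking imaginary parts then annihilates them, and using $\text{Im}(\bi z)=\text{Re}(z)$ together with $\bar u^{n+\frac12}=\tfrac12(\bar u^n+\bar u^{n+1})$ reduces the remaining term to $\tfrac12(Q(u^{n+1})-Q(u^n))$, because the cross terms $u^{n+1}\bar u^n-u^n\bar u^{n+1}$ are purely imaginary and hence drop out. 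This yields $Q(u^{n+1})=Q(u^n)$ and is routine.

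For the symplectic structure I would apply the exterior differential $d$ on phase space to the real form of \eqref{mid-nls}, exactly as in Theorem \ref{ts-cl}, and then exploit the mid-point identity
\[
dp^{n+1}\wedge dq^{n+1}-dp^n\wedge dq^n=(dp^{n+1}-dp^n)\wedge dq^{n+\frac12}+dp^{n+\frac12}\wedge(dq^{n+1}-dq^n),
\]
which holds precisely because $z^{n+\frac12}=\tfrac12(z^n+z^{n+1})$. Substituting the differentiated increments and integrating over $\R$, the dispersive contributions become $\int_{\R}\Delta dq^{n+\frac12}\wedge dq^{n+\frac12}\,dx$ and $\int_{\R}dp^{n+\frac12}\wedge\Delta dp^{n+\frac12}\,dx$; one integration by parts in $x$ turns each into the wedge of a one-form with itself, which is zero. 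Writing $h(p,q)=\tfrac1{2\sigma+2}(p^2+q^2)^{\sigma+1}$ for the nonlinear density and using $dp\wedge dp=dq\wedge dq=0$, the nonlinear part collapses to $\delta t\,(h_{pq}-h_{qp})\,dp^{n+\frac12}\wedge dq^{n+\frac12}$, which vanishes by symmetry of the mixed partials. Hence $\int_{\R}(dp^{n+1}\wedge dq^{n+1}-dp^n\wedge dq^n)\,dx=0$, i.e. $\bar\omega^{n+1}=\bar\omega^n$ a.s.

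The main obstacle is the symplectic part, where two points demand care. First, I must confirm that the boundary contributions generated by the integration by parts in $x$ actually vanish; I would justify this from the spatial decay and $H^s$-regularity supplied by the well-posedness of \eqref{s-nls}, just as in Theorem \ref{sy-st}. Second, the signs in the differentiated increments must be kept faithful to \eqref{h-nls}, so that the Hessian terms assemble into $h_{pq}-h_{qp}$ rather than $h_{pq}+h_{qp}$; only with the correct Hamiltonian signs does the cancellation occur. Once these are handled, the random factor $\delta W_n$ drops out harmlessly, since the two surviving dispersive integrals vanish pathwise for every realization.
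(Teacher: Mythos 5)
Your proposal is correct and follows essentially the same route as the paper: take the exterior differential of the real $(p,q)$ form of the scheme, use the symmetry of the Hessian of the nonlinear density together with the self-annihilation of wedge products, and integrate over $\R$ (with an integration by parts for the dispersive term), while the charge argument --- multiply by $\bar u^{n+\frac12}$, integrate, take imaginary parts --- is identical to the paper's. The only difference is organizational: you route the wedge algebra through the mid-point polarization identity $dp^{n+1}\wedge dq^{n+1}-dp^n\wedge dq^n=(dp^{n+1}-dp^n)\wedge dq^{n+\frac12}+dp^{n+\frac12}\wedge(dq^{n+1}-dq^n)$, whereas the paper expands $dp^{n+1}\wedge dq^{n+1}$ directly and back-substitutes $dp^n,dq^n$; the same cancellations occur either way.
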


\begin{proof}
Denote $\Phi(p,q):=\frac 1{2\sigma+2}(p^2+q^2)^{\sigma+1}$.
Then Eq. \eqref {mid-nls} can be rewritten as 
\begin{align*}
&p^{n+1}=p^{n}-\Delta q^{n+\frac12}\,\delta W_n-\frac {\partial {\Phi}}{\partial q}(p^{n+\frac12}, q^{n+\frac12})\,\delta t, \\
&q^{n+1}=q^{n}+\Delta p^{n+\frac12}\,\delta W_n+\frac {\partial {\Phi}}{\partial p}(p^{n+\frac12}, q^{n+\frac12})\,\delta t.
\end{align*}
To lighten the notations, we omit the variable $(p^{n+\frac12}, q^{n+\frac12})$ of $\Phi$.
Taking the differential on the phase space, we obtain 
\begin{align*}
&d p^{n+1}=d p^{n}-\Delta dq^{n+\frac12}\,\delta W_n-\frac{\partial^2 \Phi}{\partial q \partial p}dp^{n+\frac12} \,\delta t - \frac{\partial^2 \Phi}{\partial  q^2} dq^{n+\frac 12} \,\delta t, \\
&d q^{n+1}=d q^{n}+\Delta dp^{n+\frac12}\,\delta W_n+\frac{\partial^2 \Phi}{\partial  p^2}dp^{n+\frac12}\,\delta t+\frac{\partial^2 \Phi}{\partial p \partial q}dq^{n+\frac 12} \,\delta t.
\end{align*}
Then
\begin{align*}
&dp^{n+1}\wedge dq^{n+1} \\
&=dp^n \wedge dq^n +dp^n \wedge \left( \Delta dp^{n+\frac12}\,\delta W_n
+\frac{\partial^2 \Phi}{\partial  p^2}dp^{n+\frac12}\,\delta t+\frac{\partial^2 \Phi}{\partial p \partial q}dq^{n+\frac 12} \,\delta t \right)\\
&\quad +\left( -\Delta dq^{n+\frac12}\,\delta W_n-\frac{\partial^2 \Phi}{\partial q \partial p}dp^{n+\frac12}\, \delta t - \frac{\partial^2 \Phi}{\partial  q^2} dq^{n+\frac 12}\, \delta t \right)\wedge dq^n \\
&\quad+\left( -\Delta dq^{n+\frac12}\,\delta W_n-\frac{\partial^2 \Phi}{\partial q \partial p}dp^{n+\frac12}\, \delta t - \frac{\partial^2 \Phi}{\partial  q^2} dq^{n+\frac 12} \,\delta t \right)\\
&\qquad \wedge \left( \Delta dp^{n+\frac12}\,\delta W_n
+\frac{\partial^2 \Phi}{\partial  p^2}dp^{n+\frac12}\,\delta t+\frac{\partial^2 \Phi}{\partial p \partial q}dq^{n+\frac 12} \,\delta t \right).
\end{align*}
Substituting 
\begin{align*}
&d p^n=d p^{n+1}+\frac 12\Delta dq^{n+\frac12}\,\delta W_n+\frac 12\frac{\partial^2 \Phi}{\partial q \partial p}dp^{n+\frac12}\, \delta t +\frac 12\frac{\partial^2 \Phi}{\partial  q^2} dq^{n+\frac 12} \,\delta t, \\
&d q^n=d q^{n+1}-\frac 12\Delta dp^{n+\frac12}\,\delta W_n-\frac 12\frac{\partial^2 \Phi}{\partial  p^2}dp^{n+\frac12}\,\delta t-\frac 12\frac{\partial^2 \Phi}{\partial p \partial q}dq^{n+\frac 12}\, \delta t
\end{align*}
into the above equality and integrating in $\R$, we have
$\bar \omega^{n+1}=\bar \omega^{n}$ a.s.

Multiplying \eqref {mid-nls} by the complex conjugate $\bar u^{n+\frac12}$ of $u^{n+\frac12}$ on both sides, integrating in $\R$ and 
taking the imaginary part, we obtain the
 preservation of the charge conservation law for this scheme.
 \qed
\end{proof}

In the rest of the section, we assume that $s> \frac 12$.
Now we investigate the  convergence rate of the mid-point scheme \eqref{mid-nls}. To deal with the power law of the nonlinear term, we introduce  a cut-off function $\theta \in C^{\infty}(\R)$  such that supp $\theta \in [0,2]$ and $\theta=1$ on $[0,1]$, and denote $\theta_R(u)=\theta(\frac {\|u\|_{H^s}}{R})$ (see also \cite{BDD15,DD04}).
Then we consider the truncated equation 
\begin{align}\label{ts-nls}
du_R=\bi \,\Delta u_R \circ dW + \bi\, \theta_R(u_R)g(u_R)\,dt.
\end{align}
The mild form of the corresponding mid-point scheme of Eq. \eqref{ts-nls} is
\begin{align}\label{ts-mid}
&u_R^{n+1}=S_{n,\delta t}u_R^n +\bi \,\delta t\,T_{n,\delta t}\,\theta_R(u_R^{n+\frac12})\,g(u_R^{n+\frac12}),
\end{align}
where 
\begin{align}\label{op}
S_{n,\delta t}=\left(I-\bi \frac {\delta W_n}{2} \Delta \right)^{-1}\left(I+\bi \frac {\delta W_n}2\Delta \right) \quad\text{and}\quad
T_{n,\delta t}=\left(I-\bi \frac {\delta W_n}{2}  \Delta \right)^{-1}.
\end{align}
The scheme \eqref{ts-mid} is well-defined and $\{u_R^n\}_{n=0}^N$ are uniformly bounded provided that the nonlinear term $\theta_R\cdot g$ is Lipschitz continuous.
This Lipschitz continuity can be guaranteed by the following result.
It can be proved directly by Proposition 2.2 in \cite{BDD15} because $H^s$($s>\frac 12$) is an algebra.

\begin{lm}\label{lip}
Let $s>\frac 12$. 
Then $g_R:=\theta_R\cdot g\in \text{Lip}(H^s)\cap \CC_b^2(H^s;H^s)$, i.e., for any $u,v,w\in H^s$, there exists a constant $C(\sigma,R)$ such that 
\begin{align*}
\|g_R^{}(u)-g_R^{}(v)\|_{H^s} 
&\le C(\sigma,R)\|u-v\|_{H^s},\\
\|Dg_R^{}(u)(v)\|_{H^s} 
&\le C(\sigma,R)\|v\|_{H^s},\\
\|D^2g_R^{}(u)(v,w)\|_{H^s} 
&\le C(\sigma,R)\|v\|_{H^s}\|w\|_{H^s}.
\end{align*}
\end{lm}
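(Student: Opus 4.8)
The plan is to exploit two facts: for $s>\frac12$ the Sobolev space $H^s(\R)$ embeds into $L^\infty$ and is a Banach algebra under pointwise multiplication, $\|\phi\psi\|_{H^s}\le C_s\|\phi\|_{H^s}\|\psi\|_{H^s}$, and the cut-off $\theta_R$ confines every estimate to the ball $\{\|u\|_{H^s}\le 2R\}$. First I would observe that, since $\sigma\in\N$, the nonlinearity $g(u)=|u|^{2\sigma}u=u^{\sigma+1}\bar u^{\sigma}$ is a polynomial of degree $2\sigma+1$ in $(u,\bar u)$; repeatedly applying the algebra estimate then gives the polynomial bounds of Proposition 2.2 in \cite{BDD15},
\[
\|g(u)\|_{H^s}\le C\|u\|_{H^s}^{2\sigma+1},\quad
\|Dg(u)(v)\|_{H^s}\le C\|u\|_{H^s}^{2\sigma}\|v\|_{H^s},\quad
\|D^2g(u)(v,w)\|_{H^s}\le C\|u\|_{H^s}^{2\sigma-1}\|v\|_{H^s}\|w\|_{H^s},
\]
so that $g\in\CC^2(H^s;H^s)$ with derivatives controlled by powers of $\|u\|_{H^s}$.

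Next I would record the relevant properties of the scalar cut-off $\theta_R(u)=\theta(\|u\|_{H^s}/R)$. Because $H^s$ is a Hilbert space, the map $u\mapsto\|u\|_{H^s}$ is smooth away from the origin, while near the origin $\theta_R\equiv 1$ (as $\theta\equiv1$ on $[0,1]$); hence $\theta_R\in\CC^\infty(H^s;[0,1])$, it is supported in $\{\|u\|_{H^s}\le 2R\}$, and its first and second derivatives are uniformly bounded, $|D\theta_R(u)(v)|\le C(R)\|v\|_{H^s}$ and $|D^2\theta_R(u)(v,w)|\le C(R)\|v\|_{H^s}\|w\|_{H^s}$. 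The factor $\|u\|_{H^s}^{-1}$ produced by differentiating the norm is harmless, since $D\theta_R$ and $D^2\theta_R$ vanish unless $\|u\|_{H^s}\ge R$.

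Finally I would assemble the three inequalities from the product $g_R=\theta_R\,g$. The Leibniz rule gives $Dg_R(u)(v)=D\theta_R(u)(v)\,g(u)+\theta_R(u)\,Dg(u)(v)$ and a corresponding three-term expression for $D^2g_R$. On the support of $\theta_R$ one has $\|u\|_{H^s}\le 2R$, so the polynomial factors from the first paragraph become constants $C(\sigma,R)$; combined with the bounds on $D\theta_R,D^2\theta_R$ this yields the stated estimates for $Dg_R$ and $D^2g_R$, i.e. $g_R\in\CC_b^2(H^s;H^s)$. The Lipschitz bound then follows from $\sup_u\|Dg_R(u)\|\le C(\sigma,R)$ by the mean value inequality, or equivalently by splitting $g_R(u)-g_R(v)=\theta_R(u)(g(u)-g(v))+(\theta_R(u)-\theta_R(v))g(v)$ and using the algebra estimate together with the Lipschitz continuity of $\theta_R$ and the confinement to $\{\|\cdot\|_{H^s}\le 2R\}$.

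Since the substantive analytic content — the algebra property and the resulting $\CC^2$ regularity of $g$ — is exactly Proposition 2.2 of \cite{BDD15}, the only point requiring genuine care is the regularity and boundedness of the norm-based cut-off $\theta_R$. Once its derivatives are seen to be bounded and supported in the shell $R\le\|u\|_{H^s}\le 2R$, the remainder is a routine application of the product rule, and I expect no further obstacle.
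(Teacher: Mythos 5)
Your proof is correct and follows essentially the same route as the paper, whose entire justification is to cite Proposition 2.2 of \cite{BDD15} together with the algebra property of $H^s$ for $s>\frac12$; you have simply filled in the routine details about the regularity and support of the cut-off $\theta_R$ and the product rule. One small caveat: in your alternative splitting $g_R(u)-g_R(v)=\theta_R(u)\left(g(u)-g(v)\right)+\left(\theta_R(u)-\theta_R(v)\right)g(v)$ the second term is only controlled after assuming, without loss of generality, $\|v\|_{H^s}\le\|u\|_{H^s}$ (otherwise $g(v)$ need not be bounded while $\theta_R(u)\neq0$), so your primary argument via $\sup_u\|Dg_R(u)\|\le C(\sigma,R)$ and the mean value inequality is the cleaner and fully watertight one.
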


Assume that $u_0\in H^{s+10}$.
Since the nonlinear term of Eq. \eqref{ts-nls} satisfies the global Lipschitz condition established in the above Lemma, the solution $u_R$ has paths in $H^{s+10}_x$ a.s. 
Moreover, the Gronwall inequality yields the moments' uniform boundedness of $u_R$, i.e., there is a  constant $C=C(T,R,\|u_0\|_{H^{s+10}})$ such that for any $p\ge 2$,
\begin{align*}
\E\left( \sup_{t\le T}\|u_R(t)\|_{H^{s+10}}^p\right) \le C.\end{align*}
It turns out in the following result that the scheme \eqref{ts-mid} is convergent with mean square order one. 
The key of the proof  lies on the mild form of the solution and the unitarity of both $S(t,r)$ and $S_{n,\delta t}$. 

\begin{prop}\label{ts-order}
Let $u_0 \in H^{s+10}.$ The scheme \eqref{ts-mid} has mean square order one, i.e., there exists 
a constant $C=C(T,R,\|u_0\|_{H^{s+10}})$ such that 
\begin{align}\label{ts-od}
\sup_{0\le n \le N}\sqrt{\E\left(\|u_R^n-u_R(t_n)\|_{H^s}^2 \right)}\le C\delta t.
\end{align}
\end{prop}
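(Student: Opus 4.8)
The plan is to compare the scheme \eqref{ts-mid} with the mild formulation of the truncated equation \eqref{ts-nls} step by step, and to close the resulting estimate by a discrete Gronwall argument, using throughout the $H^s$-isometry of $S_{n,\delta t}$ and the uniform $H^{s+10}$-moment bound on $u_R$. Write $g_R:=\theta_R\cdot g$ and $e^n:=u_R^n-u_R(t_n)$. Subtracting the exact mild identity
\[
u_R(t_{n+1})=S(t_{n+1},t_n)u_R(t_n)+\bi\int_{t_n}^{t_{n+1}}S(t_{n+1},s)g_R(u_R(s))\,ds
\]
from \eqref{ts-mid} gives $e^{n+1}=S_{n,\delta t}e^n+\mathcal R_1^n+\mathcal R_2^n$, where $\mathcal R_1^n:=(S_{n,\delta t}-S(t_{n+1},t_n))u_R(t_n)$ is the linear local error and $\mathcal R_2^n:=\bi\delta t\,T_{n,\delta t}g_R(u_R^{n+\frac12})-\bi\int_{t_n}^{t_{n+1}}S(t_{n+1},s)g_R(u_R(s))\,ds$ the nonlinear one. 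I would further split $\mathcal R_2^n$ by inserting $\bar u^{n+\frac12}:=\tfrac12(u_R(t_n)+u_R(t_{n+1}))$: Lemma \ref{lip} together with $\|T_{n,\delta t}\|_{H^s\to H^s}\le 1$ shows that $g_R(u_R^{n+\frac12})-g_R(\bar u^{n+\frac12})$ contributes at most $C\delta t(\|e^n\|_{H^s}+\|e^{n+1}\|_{H^s})$, which (after absorbing the $\|e^{n+1}\|$-term on the left for $\delta t$ small) is exactly what is fed into the Gronwall step; the remaining consistency error $\tilde{\mathcal R}_2^n:=\bi\delta t\,T_{n,\delta t}g_R(\bar u^{n+\frac12})-\bi\int_{t_n}^{t_{n+1}}S(t_{n+1},s)g_R(u_R(s))\,ds$ then depends only on the exact solution $u_R$, not on the numerical iterates.

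For the local errors I would pass to the Fourier side, where $S_{n,\delta t}$ and $S(t_{n+1},t_n)$ act as the unimodular multipliers $\frac{1-\bi a}{1+\bi a}$ and $e^{-2\bi a}$ with $a=\tfrac{\delta W_n}{2}|\xi|^2$ (recall the exact symbol is $e^{-\bi|\xi|^2\delta W_n}$). Their difference is $\tfrac23\bi a^3+O(a^4)$ near the origin and bounded by $C|a|^3$ throughout, so that $\|\mathcal R_1^n\|_{H^s}\le C|\delta W_n|^3\|u_R(t_n)\|_{H^{s+6}}$, whence the strong local estimate $(\E\|\mathcal R_1^n\|_{H^s}^2)^{1/2}\le C\delta t^{3/2}$ follows from $\E|\delta W_n|^6\le C\delta t^3$. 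Crucially, since $\E[a^3]=0$, the $\FFF_{t_n}$-conditional mean $\E[\mathcal R_1^n\mid\FFF_{t_n}]$ is governed by the even term $a^4$, yielding the sharper weak bound $\|\E[\mathcal R_1^n\mid\FFF_{t_n}]\|_{H^s}\le C\delta t^2\|u_R(t_n)\|_{H^{s+8}}$. The analogous strong ($\delta t^{3/2}$) and weak ($\delta t^2$) estimates for $\tilde{\mathcal R}_2^n$ would follow from Taylor expanding $g_R$ (its $\CC_b^2$-bound from Lemma \ref{lip}), the second-order accuracy of the midpoint quadrature, and the same symbol expansions; the repeated cost of spatial derivatives in these steps is what forces $u_0\in H^{s+10}$.

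The main obstacle is that summing the $N\sim\delta t^{-1}$ random local errors by the triangle inequality would only deliver order $\delta t^{1/2}$; the gain to full order one comes from the isometry of $S_{n,\delta t}$. Squaring the recursion and taking expectations, $\E\|S_{n,\delta t}e^n\|_{H^s}^2=\E\|e^n\|_{H^s}^2$ is exact, while the decisive cross term is rewritten, using $S_{n,\delta t}^*S_{n,\delta t}=I$, as
\[
\E\langle S_{n,\delta t}e^n,\mathcal R_1^n\rangle_{H^s}=\E\langle e^n,(I-S_{n,\delta t}^*S(t_{n+1},t_n))u_R(t_n)\rangle_{H^s}.
\]
Because $e^n$ is $\FFF_{t_n}$-measurable, conditioning on $\FFF_{t_n}$ reduces this to the conditional mean of the multiplier $1-\overline{\bigl(\tfrac{1-\bi a}{1+\bi a}\bigr)}e^{-2\bi a}=\tfrac23\bi a^3+O(a^4)$, whose leading odd term has vanishing mean, so the pairing is of systematic size $O(\delta t^2)$; by Cauchy--Schwarz and Young it contributes $\tfrac12\delta t\,\E\|e^n\|_{H^s}^2+C\delta t^3$. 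Combining this with the analogous pairing of the nonlinear consistency term, the diagonal bound $\E\|\mathcal R_1^n+\mathcal R_2^n\|_{H^s}^2\le C\delta t^3$, and the Lipschitz feedback of the first paragraph produces a recursion $\E\|e^{n+1}\|_{H^s}^2\le(1+C\delta t)\E\|e^n\|_{H^s}^2+C\delta t^3$, and the discrete Gronwall inequality then gives $\sup_{0\le n\le N}\E\|e^n\|_{H^s}^2\le C\delta t^2$, i.e. \eqref{ts-od}.

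I expect the delicate point to be verifying that the pairing of the unitarily propagated accumulated error with the centered part of the \emph{nonlinear} consistency error is genuinely of systematic ($\delta t^2$) rather than merely mean-square ($\delta t^{3/2}$) size; it is precisely this cancellation, the stochastic analogue of the fundamental theorem of mean-square convergence, that prevents the loss of half an order, and it is where the interplay between the midpoint quadrature, the Cayley-versus-exponential symbol expansion, and the higher Sobolev regularity must be checked most carefully.
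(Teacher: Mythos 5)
Your proposal is correct in substance and rests on exactly the mechanisms the paper uses: a mean-square error recursion that exploits the unitarity of $S_{n,\delta t}$ (Cayley symbol $\tfrac{1-\bi a}{1+\bi a}$ with $a=\tfrac{\delta W_n}{2}|\xi|^2$), local errors of strong order $\delta t^{3/2}$ but conditional-mean order $\delta t^{2}$, cross terms handled by conditioning on $\FFF_{t_n}$ together with Plancherel and the vanishing of odd Gaussian moments, and a discrete Gronwall step. Two differences are worth noting. First, you center the nonlinear consistency error at the exact average $\bar u^{n+\frac12}=\tfrac12(u_R(t_n)+u_R(t_{n+1}))$, so that $g_R(u^{n+\frac12})-g_R(\bar u^{n+\frac12})$ is controlled purely by $\tfrac12\|e^n+e^{n+1}\|_{H^s}$ via Lemma \ref{lip}; the paper instead centers at $u(t_n)$, which leaves an $O(\delta t^{1/2})$ piece $\tfrac12(S_{n,\delta t}-I)u^n$ inside $u^{n+\frac12}-u(t_n)$ and therefore forces the explicit Taylor expansion $A_2=A_{21}+A_{22}$ with the independence/Parseval argument. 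Your choice makes the feedback term cleaner, but the same $O(\delta t^{1/2})$ difficulty reappears inside $\tilde{\mathcal R}_2^n$ through $g_R(\bar u^{n+\frac12})-g_R(u_R(s))$, so the total work is comparable. Second, you derive the linear local error and its conditional mean from the symbol expansion directly, whereas the paper outsources the terms $e_{n+1,2},e_{n+1,3},e_{n+1,4}$ wholesale to \cite[Proposition 2.6]{BDD15} and only works out the genuinely new term $e_{n+1,5}$ and its pairings. The one place where your write-up is a plan rather than a proof is the estimate you yourself flag at the end: showing that the pairing of $S_{n,\delta t}e^n$ with $\tilde{\mathcal R}_2^n$ is of systematic size $O(\delta t^2)$ rather than $O(\delta t^{3/2})$. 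This is exactly where the paper (via the cited proposition and the $A_{21}$ computation) spends its effort, and your proposed tools --- conditioning, the identity $\E\exp(\bi(W(r)-W(t_n))|\xi|^2)=\exp(-\tfrac{r-t_n}{2}|\xi|^4)$, and the $\CC_b^2$ bound on $g_R$ --- are the correct ones; note only that the leading cubic term $a^3$ generally multiplies a $\delta W_n$-dependent unimodular factor, so its conditional mean vanishes only to order $\delta t^2$ (not exactly), and that an interpolation between the bounds $\min(1,|a|^3)$ is needed to keep the Sobolev loss within the assumed $H^{s+10}$ regularity.
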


\begin{proof}
For simplicity, we omit the dependence $R$ of $u^n$ and $u$.
Denote by $e_n=u^n-u(t_n)$ the local error, $0\le n \le N$. 
Subtracting the mild formulation 
$$u(t_{n+1})=S(t_{n+1},t_n)u(t_n)+\bi \int_{t_n}^{t_{n+1}}S(t_{n+1},r)g_R^{}(u(r))\,dr$$
of \eqref{ts-nls} from the scheme \eqref{ts-mid},
we have $e_{n+1}=\sum\limits_{j=1}^5 e_{n+1,j}$
with 
\begin{align*}
&e_{n+1,1}=S_{n,\delta t}e_n,\\
&e_{n+1,2}=\big(S_{n,\delta t}-S(t_{n+1},t_n)\big)u(t_n),\\
&e_{n+1,3}=\bi \int_{t_n}^{t_{n+1}}S(t_{n+1},r)\big(g_R^{}(u(t_n))-g_R^{}(u(r))\big)dr,\\
&e_{n+1,4}=\bi \big(\delta t\, T_{n,\delta t}-\int_{t_n}^{t_{n+1}}S(t_{n+1},r)g_R^{}(u(t_n))dr\big),
\end{align*}
and
\begin{align*}
e_{n+1,5}=\bi\,\delta t \, T_{n,\delta t} \, \big(g_R^{}(u^{n+\frac 12})-g_R^{}({u(t_n)})\big).
\end{align*}
The Cauchy-Schwarz inequality yields
\begin{align*}
\E(\|e_{n+1}\|_{H^s}^2)
&\le \E(\|e_{n+1,1}\|_{H^s}^2)+
3\sum_{j=2}^4 \E(\| e_{n+1,j}\|_{H^s}^2)\\
&\quad+2 \left|\sum_{j=2}^4 \E(e_{n+1,1},\,e_{n+1,j})_{H^s}\right|+2 \left|\sum_{j=1}^5  \E(e_{n+1,j},\,e_{n+1,5})_{H^s}\right|.
\end{align*}
It is shown  in \cite[Proposition 2.6]{BDD15} that
\begin{align*}
\E(\|e_{n+1}\|_{H^s}^2 ) \le (1+\delta t\,K_1) \E(\|e_n\|_{H^s}^2)+(\delta t)^3 K_2 +2 \left| \sum_{j=2}^5 \E(e_{n+1,j},\,e_{n+1,5})_{H^s} \right|.
\end{align*}
It remains to deal with the last term. 
Since $T_{n,\delta t}$ and $g_R^{}$ are bounded in $H^s$,
\begin{align*}
\E(\|e_{n+1,5}\|_{H^s}^2)
&\le C_R (\delta t)^2 \E (\|u^{n+\frac12}-u(t_n)\|_{H^s}^2) \\\nonumber
&\le C_R (\delta t)^2 \E \left(\left\|\frac {I+S_{n,\delta t}}2 u^n-u({t_n})+\bi\,\delta t\,T_{n,\delta t}\,g_R^{}(u^{n+\frac 12})\right\|_{H^s}^2\right).
\end{align*}
By Cauchy-Schwarz inequality and the boundedness of the solution, we have 
\begin{align*}
&\E \left( \left\|\frac {I+S_{n,\delta t}}2 u^n-u({t_n})+\bi\,\delta t\,T_{n,\delta t}\,g_R^{}(u^{n+\frac 12}) \right\|_{H^s}^2 \right)  \\ 
&\lesssim \E\left(\left\|\frac {I+S_{n,\delta t}}2\big( u({t_n})-u^n\big)\right\|_{H^s}^2\right)+\E\left(\left\|\left(\frac {I+S_{n,\delta t}}2 -I\right)u(t_n)\right\|_{H^s}^2\right)  \\
&\quad +(\delta t)^2\E\left(\|u^{n+\frac 12}\|_{H^s}^2\right) \\
&\lesssim \E \|e_n\|_{H^s}^2+\E \left(\left\|\left(\frac {I+S_{n,\delta t}}2 -I\right)u(t_n)\right\|_{H^s}^2\right)+(\delta t)^2 \left( I+\E\left(\|u_0\|_{H^s}^2\right) \right).
\end{align*}
Here and what follows, $A \lesssim B$ denotes $A\leq CB$ for a positive constant $C$ depending only on $R$ and $T$.
The Parseval theorem, combined with the independence of $S_{n,\delta t}$ and $ u({t_n})$, implies
\begin{align*}
&\E\left(\left\|\left(\frac {I+S_{n,\delta t}}2 -I\right)u({t_n})\right\|_{H^s}^2\right)\\
&= \int_{\R}\E\left(\left|\frac {1-\exp(-2\bi \arctan (|\xi|^2 \frac {\delta W_n}2)) }2\right|^2 |\mathcal F (u({t_n}))|^2(\xi)\right)(1+|\xi|^2)^s\,d\xi \\
&\lesssim \delta t\, \E(\|u({t_n})\|_{H^{s+2}}^2) \lesssim \delta t \,(1+\E(\|u_0\|_{H^{s+2}}^2)).
\end{align*}
Here and what after $\mathcal F$ denotes the Fourier transform.
Therefore,
\begin{align*}
\E(\|e_{n+1,5}\|_{H^s}^2)
\lesssim (\delta t)^2 E(\left \|e_n \right \|_{H^s}^2)+\left(1+\E(\|u_0\|_{H^s}^2)+\E(\|u_0\|_{H^{s+2}}^2)\right) (\delta t)^3. 
\end{align*}
For the term $\E (e_{n+1,1},\,e_{n+1,5})_{H^s}$, we have
\begin{align*}
&|\E (e_{n+1,1},\,e_{n+1,5})_{H^s}|\\
&\le \left|\E\left(e_n,\,\bi\,\delta t\, S_{n,\delta t}^{*}\,T_{n,\delta t}\,\big(g_R^{}( u^n)-g_R^{}({u(t_n)})\big)\right)_{H^s} \right|\\
&\quad + \left|\E\left(e_n,\,\bi\,\delta t\, S_{n,\delta t}^{*}\,T_{n,\delta t}\,\big(g_R^{}(u^{n+\frac 12})-g_R^{}( u^n)\big)\right)_{H^s} \right|
=: A_1+A_2.
\end{align*}
By the boundedness of $T_{n,\delta t}$ and the Lipschitz continuity of $g_R^{}$,  it is not difficult to show that
\begin{align*}
A_1
\le \delta t\,E(\|e_n \|^2_{H^s}).
\end{align*}
To deal with $A_2$, we give an estimation
of $u^{n+\frac12}-u^n$. Similar arguments imply that 
\begin{align*}
|\E(\|u^{n+\frac12}-u^n\|^2_{H^s})|
&\lesssim \E\left( \left \| \left(\frac {S_{n,\delta t}-I}2\right) u^n\right \|^2_{H^s}\right)
+(\delta t)^2 \left( 1+\E(\|u_0\|_{H^s}^2) \right).
\end{align*}
Fourier transform and the independence of $S_{n,\delta t}$ and $ u^n$ yield that 
\begin{align*}
|\E(\|u^{n+\frac12}-u^n\|^2_{H^s})|
&\lesssim \ \delta t \,\E(\| u^n\|^2_{H^{s+2}})
+(\delta t)^2 \left( 1+\E(\|u_0\|_{H^s}^2) \right)\\
&\lesssim \ \delta t\left( 1+\E(\|u_0\|_{H^s}^2)
+\E \left( \| u^n\|^2_{H^{s+2}} \right) \right).
\end{align*}
Now we turn to the estimate of $A_2$.
Again by the Taylor expansion of $g_R^{}$, there exists an $\eta$
depending on $u^n$ and $u(t_n)$ such that 
\begin{align*}
A_2
\le & \left|\E\left(e_n,\,\bi\,\delta tS_{n,\delta t}^{*}\,T_{n,\delta t}\,Dg_R^{}( u^n)
\big(u^{n+\frac 12}- u^n\big)\right)_{H^s} \right|\\
&+ \frac12 \left| \E\left(e_n,\,\bi\,\delta tS_{n,\delta t}^{*}\,T_{n,\delta t}\,D^2g_R^{}(\eta)
\big(u^{n+\frac 12}- u^n,u^{n+\frac 12}- u^n\big)\right)_{H^s} \right|\\
=:& A_{21} +A_{22}.
\end{align*}
The term $A_{21}$ can be estimated as  
\begin{align*}
A_{21}&\le \left|\E\left(e_n,\,\bi\,\delta t S_{n,\delta t}^{*}\,T_{n,\delta t}\,Dg_R^{}( u^n)
\frac 12\big(S_{n,\delta t}-I\big)u_n\right)_{H^s} \right|\\
&\quad+ \left|\E\left(e_n,\,\bi\,\delta tS_{n,\delta t}^{*}\,T_{n,\delta t}\,Dg_R^{}( u^n)
\frac 12 \int_{t_n}^{t_{n+1}}T_{n,\delta t}g_R^{}(u_{n+\frac 12})\,dr \right)_{H^s} \right|\\
&\le \left|\E\left(e_n,\,\bi\,\delta t S_{n,\delta t}^{*}\,T_{n,\delta t}\,Dg_R^{}( u^n)
\frac 12\big(S_{n,\delta t}-I\big)u_n\right)_{H^s} \right|\\
&\quad+C(\delta t)^2 \E\left(\|e_n\|_{H^s}^2\right)^{\frac 12}(1+\E\left(\|u_0\|_{H^{s}}^4\right)^\frac 12).
\end{align*}
The independence of $u(t_n)$ and $\delta W_n$, together with the boundedness of $Dg_R^{}$ and Plancherel theorem, yields that 
\begin{align*}
&\left|\E\left(e_n,\,\bi\,\delta tS_{n,\delta t}^{*}\,T_{n,\delta t}\,Dg_R^{}( u^n)
\frac 12\big(S_{n,\delta t}-I\big)u_n\right)_{H^s} \right|\\
&\lesssim (\delta t)^2 \E(\|e_n\|_{H^s}^2)^\frac12 (1+ \E(\|u_0\|_{H^{s+4}}^4)^\frac12). 
\end{align*}
The above estimates yield that 
\begin{align*}
A_{21}
\lesssim (\delta t)^2 \E(\|e_n\|_{H^s}^2)^\frac12 (1+ \E(\|u_0\|_{H^{s+4}}^4)^\frac12).
\end{align*}
From the estimation of $|\E(\|u^{n+\frac12}-u^n\|^2_{H^s})|$ and Cauchy-Schwarz inequality, we obtain
\begin{align*}
A_{22}
\lesssim (\delta t)^2 \E(\|e_n\|_{H^s}^2)^\frac12 (1+ \E(\|u_0\|_{H^{s+2}}^4)^{\frac 12} .
\end{align*} 
The above estimations on $A_1$ and $A_2$ imply 
\begin{align*}
|\E (e_{n+1,1},\,e_{n+1,5})_{H^s}|
\lesssim \delta t\,\E(\|e_n\|^2_{H^s})
+(\delta t)^3 \left( 1+ \E(\|u_0\|_{H^{s+2}}^4)+ \E(\|u_0\|_{H^{s+4}}^4) \right).
\end{align*}
Finally, all terms of the form $|\E (e_{n+1,j},\,e_{n+1,5})_{H^s}|$, $j=2,3,4$ are bounded by Cauchy-Schwarz inequality.
There exist positive constants $\varepsilon,K_1$ and $K_2$ such that for any 
$\delta t\le \varepsilon$ and $n=0,1\cdots,N-1$,
\begin{align*}
\E(\|e_{n+1}\|_{H^s}^2 ) \le (1+\delta t \,K_1) \E(\|e_n\|_{H^s}^2)+(\delta t)^3 K_2.
\end{align*}
We complete the proof by Gronwall inequality.
\qed
\end{proof}

Since the midpoint scheme is implicit, we expect a more efficient one, which
 uses splitting techniques and 
explicit formulas to solve the subproblems containing the nonlinear terms.
For  Schr\"odinger-type equations, Liu \cite{Liu13b,Liu13} generalizes them to stochastic NLS equations.
For the case of NLS equation \eqref{l-nls} with random dispersion,  \cite{Mar06} has analyzed a splitting scheme.
In this part, we consider the Lie splitting scheme for Eq. \eqref{s-nls}, and prove rigorously the error estimate.
Note that both the nonlinear equation
\begin{align}\label{spl1}
du=\bi\, |u|^{2\sigma}udt,\quad u(0)=u_0,
\end{align}
and the linear equation
\begin{align}\label{spl2}
du=\bi \,\Delta u\circ dW,\quad u(0)=u_0,
\end{align}
can be solved explicitly (see e.g. \cite {Liu13}).
This fact leads to the following Lie splitting scheme:
\begin{align}\label{nspl}
&\tilde u^n=\exp(\bi\,\delta t \,|u^n|^{2\sigma})u^n,\quad 
u^{n+1}=\exp(\bi \,\delta W_n \Delta)\tilde u^n.
\end{align}

\begin{prop}\label{nspl-sc}
The splitting scheme \eqref{nspl} preserves the stochastic symplectic structure \eqref{dsss} and  charge conservation law, i.e.,
\begin{align*}
\bar \omega^{n+1}=\bar \omega^{n},\quad 
Q(u^{n+1})=Q(u^n),\quad 0\le n\le N-1,\quad  \text{a.s.}
\end{align*}
\end{prop}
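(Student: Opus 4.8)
The plan is to verify the two conservation laws for the Lie splitting scheme \eqref{nspl} by treating the two sub-flows separately and then composing them, exploiting the fact that each sub-flow is an exact solution operator for \eqref{spl1} and \eqref{spl2}. Since the charge conservation law for the nonlinear equation \eqref{spl1} and the symplecticity of the linear equation \eqref{spl2} each follow from the corresponding properties of the continuous flows, the composition $u^{n+1}=\exp(\bi\,\delta W_n\Delta)\exp(\bi\,\delta t\,|u^n|^{2\sigma})u^n$ inherits both. First I would address the charge. For the nonlinear half-step, note that $|\tilde u^n|=|\exp(\bi\,\delta t\,|u^n|^{2\sigma})u^n|=|u^n|$ pointwise in $x$, since the exponential factor has modulus one; hence $Q(\tilde u^n)=\int_{\R}|\tilde u^n|^2\,dx=\int_{\R}|u^n|^2\,dx=Q(u^n)$. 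For the linear half-step, $\exp(\bi\,\delta W_n\Delta)$ is a unitary operator on $L^2(\R)$ (it is the Fourier multiplier $\exp(-\bi\,\delta W_n|\xi|^2)$, of modulus one), so $Q(u^{n+1})=Q(\tilde u^n)$. Chaining the two equalities gives $Q(u^{n+1})=Q(u^n)$ a.s.

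Next I would establish preservation of the discrete symplectic structure \eqref{dsss}. The cleanest route is to show that each of the two maps is symplectic as a transformation on the phase variables $(p,q)$, and that the composition of symplectic maps is symplectic, so that $d p^{n+1}\wedge dq^{n+1}=dp^n\wedge dq^n$ after integrating over $\R$. For the linear step $\exp(\bi\,\delta W_n\Delta)$, this is exactly the statement that the stochastic flow of \eqref{spl2} preserves \eqref{sss}; since \eqref{spl2} is a (trivial-nonlinearity) special case of \eqref{h-nls} with $H_2\equiv 0$, the symplecticity follows from Theorem \ref{sy-st}, or equivalently by the same spatial integration of the multi-symplectic law as in that proof. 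For the nonlinear step I would write $\tilde u^n=e^{\bi\,\delta t(p^2+q^2)^{\sigma}}(p+\bi q)$, compute the real and imaginary parts $\tilde p^n,\tilde q^n$ as functions of $(p^n,q^n)$, take exterior differentials $d\tilde p^n,d\tilde q^n$, and check that $d\tilde p^n\wedge d\tilde q^n=dp^n\wedge dq^n$; this amounts to verifying that the Jacobian of the pointwise map $(p,q)\mapsto(\tilde p,\tilde q)$ has determinant one, which holds because \eqref{spl1} is itself a Hamiltonian flow whose Hamiltonian is the local density $\Phi(p,q)=\frac{1}{2\sigma+2}(p^2+q^2)^{\sigma+1}$ appearing in the midpoint proof above.

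The main obstacle I anticipate is the bookkeeping of the differential two-form through the nonlinear step: because the exponent depends on $p^2+q^2$, differentiating $\tilde p^n$ and $\tilde q^n$ produces cross terms involving $d(p^2+q^2)=2p\,dp+2q\,dq$, and one must see that all the extra terms cancel in the wedge $d\tilde p^n\wedge d\tilde q^n$ by antisymmetry ($dp\wedge dp=dq\wedge dq=0$ and the coefficients of the surviving $dp\wedge dq$ sum to one). This is a short computation but is the place where a sign or factor could go wrong, so I would carry it out explicitly rather than invoke a general principle. Once both half-steps are shown symplectic, composition and integration over $\R$ give $\bar\omega^{n+1}=\bar\omega^n$ a.s., completing the proof.
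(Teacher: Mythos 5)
Your proposal is correct and follows essentially the same route as the paper: both half-steps are recognized as exact flows of the infinite-dimensional Hamiltonian systems with Hamiltonians $H_2$ and $H_1$ (hence each preserves $\int_{\R}dp\wedge dq\,dx$, and so does their composition), and the charge conservation follows from the pointwise modulus-one factor in the nonlinear step together with the unitarity of $\exp(\bi\,\delta W_n\Delta)$. The explicit Jacobian/wedge computation you flag for the nonlinear step is a valid extra verification but is not needed once that step is identified as an exact Hamiltonian flow, which is precisely how the paper argues.
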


\begin{proof}
We rewrite Eq. \eqref{spl1} and Eq. \eqref{spl2} as
\begin{align*}
dp=-|p^2+q^2|^{\sigma}q\,dt,\quad
&dq=|p^2+q^2|^{\sigma}p\,dt,\\
dp=-\Delta q\circ dW,\quad
&dq=\Delta p\circ dW.
\end{align*}
Due to the form of Eq. \eqref{h-nls}, we have that the above two equations are both infinite-dimensional Hamiltonian system
with Hamiltonians $H_2$ and $H_1$, respectively.
It follows that
\begin{align*}
 \int_{\R}  dp^{n+1}\wedge dq^{n+1}\,dx
 =\int_{\R} d\tilde p^{n}\wedge d \tilde q^n \,dx
 =\int_{\R} dp^n\wedge dq^n\,dx.
\end{align*}
The charge conservation law follows immediately from the explicit formulations \eqref{nspl} and the unitarity of $\exp(\bi \delta t |\cdot|^2)$ and $\exp(i \delta W_n\Delta)$.
\qed
\end{proof}

In order to analyze the convergence order of the splitting scheme, we start with the truncated equation \eqref{ts-nls}, whose splitting scheme becomes 
\begin{align}\label{t-spl}
\tilde u_R^n=\exp(\bi \,\delta t\, \theta_R(u_R^{n})|u_R^{n}|^2)u_R^{n},\quad
u_R^{n+1}=\exp(\bi \,\delta W_n \,\Delta) \tilde u_R^n.
\end{align}
Introduce a right continuous function $\psi_N(t)$ such that $\psi_N(0)=u_0$ and on any interval $[t_{n},t_{n+1}]$,
\begin{align*}
\psi_{N}(t)=
\begin{cases}
\exp(\bi \,\delta t \,\theta_R(\psi_{N}(t_n))|\psi_{N}(t_n))|^{2\sigma})\psi_{N}(t_n),\quad &t\in [t_n,t_{n+1}),\\
\exp(\bi \,\delta W_n\,\Delta)\lim_{t\to t_{n+1}}\psi_{N}(t),\quad &t=t_{n+1}.
\end{cases}
\end{align*}
Then $\psi_{N}(t_{n+1})=u_R^{n+1}$ and
$d\psi_{N}=\bi\, g_R^{}(\psi_{N})dt$ for $t\in[t_{n},t_{n+1})$,
i.e.,
\begin{align}\label{tc-ddif}
\psi_N(t)
&=u_R^n+\bi \int_{t_n}^{t}g_R^{}(\psi_N(s))\,ds,\quad t\in[t_{n},t_{n+1}).
\end{align}

Since $g_R$ is Lipschitz, it is not difficult to show that
 $\psi_{N}$ and $u_R^n$ are uniformly bounded.
Based on this property, we prove that the scheme \eqref{t-spl} has mean square convergence order one.
\begin{prop}\label{od-spl}
Assume that $u_0\in H^{s+4}$. The splitting scheme \eqref{t-spl} has mean square order one, i.e.,
there is a constant $C=C(T,R,\|u_0\|_{H^{s+4}})$ such that 
\begin{align*}
\sup_{0\le n \le N}\E\left(\|u_R^n-u_R(t_n)\|_{H^s}^2 \right)\le C(\delta t)^2.
\end{align*}
\end{prop}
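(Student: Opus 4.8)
The plan is to follow the mean-square strategy of Proposition~\ref{ts-order}, but to work directly with the mild form of the splitting flow. First I would recast the scheme \eqref{t-spl} in mild form: since $\exp(\bi\,\delta W_n\Delta)=S(t_{n+1},t_n)$ and, by \eqref{tc-ddif}, $\lim_{t\to t_{n+1}}\psi_N(t)=u_R^n+\bi\int_{t_n}^{t_{n+1}}g_R(\psi_N(r))\,dr$, the update reads
\[
u_R^{n+1}=S(t_{n+1},t_n)u_R^n+\bi\int_{t_n}^{t_{n+1}}S(t_{n+1},t_n)g_R(\psi_N(r))\,dr.
\]
Subtracting the mild solution $u_R(t_{n+1})=S(t_{n+1},t_n)u_R(t_n)+\bi\int_{t_n}^{t_{n+1}}S(t_{n+1},r)g_R(u_R(r))\,dr$ of \eqref{ts-nls} yields, with $e_n=u_R^n-u_R(t_n)$, the recursion $e_{n+1}=S(t_{n+1},t_n)e_n+\mathrm{LE}$, where $\mathrm{LE}:=\bi\int_{t_n}^{t_{n+1}}(a_r+b_r+c_r)\,dr$ and
\[
a_r=S(t_{n+1},t_n)\big(g_R(\psi_N(r))-g_R(u_R(t_n))\big),\quad c_r=S(t_{n+1},r)\big(g_R(u_R(t_n))-g_R(u_R(r))\big),
\]
\[
b_r=\big(S(t_{n+1},t_n)-S(t_{n+1},r)\big)g_R(u_R(t_n)).
\]

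Then I would expand $\E\|e_{n+1}\|_{H^s}^2=\E\|e_n\|_{H^s}^2+2\,\E(S(t_{n+1},t_n)e_n,\mathrm{LE})_{H^s}+\E\|\mathrm{LE}\|_{H^s}^2$, using the isometry of $S(t_{n+1},t_n)$ in $H^s$ so that the first term is exactly $\E\|e_n\|_{H^s}^2$. For $\E\|\mathrm{LE}\|^2$, Lemma~\ref{lip}, the isometry, and the bound $\|\psi_N(r)-u_R^n\|_{H^s}\lesssim\delta t$ read off from \eqref{tc-ddif} control $a_r$, while $b_r$ and $c_r$ are treated with the temporal-regularity estimate $\E\|(S(r,t_n)-I)\phi\|_{H^s}^2\lesssim\delta t\,\E\|\phi\|_{H^{s+2}}^2$, obtained by Parseval applied to the Fourier multiplier $e^{-\bi(W(r)-W(t_n))|\xi|^2}-1$. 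This gives $\E\|\mathrm{LE}\|_{H^s}^2\lesssim\delta t^3+\delta t^2\,\E\|e_n\|_{H^s}^2$, of the local order required for a global rate one.

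The hard part will be the cross term $2\,\E(S(t_{n+1},t_n)e_n,\mathrm{LE})_{H^s}$: a bare Cauchy--Schwarz bound only produces a local $\delta t^2$ (hence global order $\tfrac12$), so I must extract an extra half power by exploiting that the increment $W(\cdot)-W(t_n)$ is independent of $\FFF_{t_n}$, while $e_n,u_R(t_n),g_R(u_R(t_n)),Dg_R(u_R(t_n))$ are $\FFF_{t_n}$-measurable. Moving the isometry across via $S(t_{n+1},t_n)^{*}S(t_{n+1},r)=S(r,t_n)^{-1}$ and conditioning on $\FFF_{t_n}$, the $b_r$ contribution collapses to the deterministic multiplier $I-\E S(r,t_n)=I-e^{-\frac12(r-t_n)|\xi|^4}$, of size $O(\delta t)$ at the expense of four derivatives; for $c_r$ I would Taylor-expand $g_R$ and retain the leading piece $S(r,t_n)^{-1}Dg_R(u_R(t_n))(S(r,t_n)-I)u_R(t_n)$, whose conditional expectation $\E[\,S(r,t_n)^{-1}Dg_R(u_R(t_n))(S(r,t_n)-I)\mid\FFF_{t_n}\,]$ is again $O(\delta t)$ as an operator $H^{s+4}\to H^s$, as the second-order expansion of $e^{\pm\bi(W(r)-W(t_n))\Delta}$ shows. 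This is exactly where $u_0\in H^{s+4}$ is used: it secures $u_R(t_n)\in H^{s+4}$ together with the uniform moment bounds for $u_R$ and $\psi_N$. After Young's inequality every cross contribution is $\lesssim\delta t\,\E\|e_n\|_{H^s}^2+\delta t^3$.

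Collecting the three estimates gives $\E\|e_{n+1}\|_{H^s}^2\le(1+K_1\delta t)\E\|e_n\|_{H^s}^2+K_2\delta t^3$ for $\delta t$ small enough, and the discrete Gronwall inequality then yields $\sup_{0\le n\le N}\E\|e_n\|_{H^s}^2\le C(\delta t)^2$. The decisive obstacle, as noted, is forcing the cross term down to order $\delta t^3$ instead of $\delta t^2$; this is the step that both requires the martingale-type cancellation obtained by conditioning on $\FFF_{t_n}$ and, through the loss of four derivatives, dictates the regularity assumption $u_0\in H^{s+4}$.
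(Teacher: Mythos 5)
Your proposal is correct and follows essentially the same route as the paper: a mild-form error recursion, unitarity of $S$ for the leading term, Lemma~\ref{lip} for the local error, and — crucially — conditioning on $\FFF_{t_n}$ so that the cross terms see only the deterministic multiplier $\E\,e^{\bi(W(r)-W(t_n))|\xi|^2}-1=e^{-\frac12(r-t_n)|\xi|^4}-1=O(\delta t\,|\xi|^4)$, which is exactly where the paper also spends the four extra derivatives of $u_0\in H^{s+4}$, before closing with Gronwall. Your three-way split of the integrand around $g_R(u_R(t_n))$ is only a cosmetic reorganization of the paper's decomposition into $e'_{n+1,2},e'_{n+1,3}$ and the subsequent terms $B_1$--$B_6$.
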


\begin{proof}
As in Proposition \ref{ts-order}, we omit the dependence of $R$ and only need to prove  
\begin{align}\label{k-od-spl}
\E(\|e_{n+1}\|_{H^s}^2)\le(1+\delta t \,K_1)\E(\|e_n\|_{H^s}^2)+(\delta t)^3K_2,
\end{align}
for small enough $\delta t$ and some positive constants $K_1$ and $K_2$.
The error $e_{n+1}=u^{n+1}-u(t_{n+1})$ can be divided into 
$$\qquad e_{n+1}=\sum_{j=1}^3 e'_{n+1,j},$$
where 
\begin{align*}
&e'_{n+1,1}=S(t_{n+1},t_{n})e_n,\\
&e'_{n+1,2}=\int_{t_n}^{t_{n+1}}S(t_{n+1},r)\big(g_R^{}(\psi_N(r))-g_R^{}(u(r))\big)\,dr,\\
\end{align*}
and
$$e'_{n+1,3}=\int_{t_n}^{t_{n+1}}\big(S(t_{n+1},t_n)-S(t_{n+1},r)\big)g_R^{}(\psi_N(r))\,dr.$$
Then
\begin{align}\label{od6}
\E(\|e'_{n+1}\|_{H^s}^2)&=\sum_{j=1}^3\E(\|e'_{n+1,j}\|_{H^s}^2)+2\E(e'_{n+1,1},e'_{n+1,2})_{H^s}\\\nonumber
&\quad +2\E(e'_{n+1,1},e'_{n+1,3})_{H^s}+2\E(e'_{n+1,2},e'_{n+1,3})_{H^s}.
\end{align}
Next, we will estimate separately the terms appearing in the previous equality.

It is obvious that 
\begin{align}\label{t1}
\E(\|e'_{n+1,1}\|_{H^s}^2)\le \E (\|e_n\|_{H^s}^2).
\end{align}
Applying H\"older inequality and the Lipschitz continuity of $g$, we obtain
\begin{align*}
&\E(\|e'_{n+1,2}\|_{H^s}^2)\\
&\lesssim \delta t\,\int_{t_n}^{t_{n+1}}\E(\|g_R^{}(u(r))-g_R^{}(\psi_N(r))\|_{H^s}^2)\,dr\\
&\lesssim \delta t\,\int_{t_n}^{t_{n+1}}\E \left( \left\|\big(S(r,t_n)-I\big)u(t_n)-e_n
+\bi \int_{t_n}^rS(r,\sigma)\big(g_R^{}(u(\sigma))-g_R^{}(\psi_N(\sigma))\big)d\sigma \right\|_{H^s}^2 \right)dr.
\end{align*}
Then again by the H\"older inequality and the boundedness of $u(t)$, $\psi_N(t)$ and $Dg$,
\begin{align*}
&\int_{t_n}^{t_{n+1}}\E \left( \left\|\big(S(r,t_n)-I\big)u(t_n)-e_n
+\bi\int_{t_n}^r S(r,\sigma)\big(g_R^{}(u(\sigma))-g_R^{}(\psi_N(\sigma))\big)d\sigma \right\|_{H^s}^2 \right)\,dr  \\
&\lesssim \int_{t_n}^{t_{n+1}}\E(\|\big(S(r,t_n)-I\big)u(t_n)\|_{H^s}^2)\,dr+\int_{t_n}^{t_{n+1}}\E(\|e_n\|_{H^s}^2)\,dr\\
&\quad+\int_{t_n}^{t_{n+1}}\E\left( \left\| \int_{t_n}^r S(r,\sigma)\big(g_R^{}(u(\sigma))-g_R^{}(\psi_N(\sigma))\big)d\sigma \right\|_{H^s}^2\right)\,dr\\
&\lesssim(\delta t)^2(1+\E(\|u_0\|_{H^{s+2}}^2))+\delta t\,E(\|e_n\|_{H^s}^2)+(\delta t)^3(1+\E(\|u_0\|_{H^s}^2)),
\end{align*}
from which we have
\begin{align}\label{t2}
\E(\|e'_{n+1,2}\|_{H^s}^2)
\lesssim (\delta t)^2\E(\|e_n\|_{H^s}^2)+(\delta t)^3(1+\E(\|u_0\|_{H^{s+2}}^2)).
\end{align}
For the term $\E(\|e'_{n+1,3}\|_{H^s}^2)$, we have
\begin{align*}
\E(\|e'_{n+1,3}\|_{H^s}^2)
&\le 2\E \left (\left \|\int_{t_n}^{t_{n+1}}\big(S(t_{n+1},r)-I\big)g_R^{}(\psi_N(r))\,dr \right \|_{H^s}^2 \right)\\
&\quad+2\E \left(\left \|\int_{t_n}^{t_{n+1}}\big(S(t_{n+1},t_n)-I\big)g_R^{}(\psi_N(r))\,dr \right \|_{H^s}^2 \right).
\end{align*}
We only deal with the first term in the above inequality and similar arguments are available for the last term.
By Young inequality and H\"older inequality, we have
\begin{align*}
&\E(\|\int_{t_n}^{t_{n+1}}\big(S(t_{n+1},r)-I\big)g_R^{}(\psi_N(r))\,dr\|_{H^s}^2)\\
&\lesssim \delta t\int_{t_n}^{t_{n+1}} \E\left( \left\| \big(S(t_{n+1},r)-I\big)g_R^{}(\psi_N(t_n)) \right\|_{H^s}^2 \right)\,dr\\
&\quad +\E\left (\left \|\int_{t_n}^{t_{n+1}}\big(S(t_{n+1},r)-I\big)\big(g_R^{}(\psi_N(r)-g_R^{}(\psi_N(t_n)))\big)\,dr\right \|_{H^s}^2 \right ).
\end{align*}
By Lemma 2.10 in \cite{BDD15},
$$\E(\|\big(S(r,t_n)-I\big)g_R^{}(\psi_N(r))\|_{H^s}^2)\le C\,\delta t\, \E(\|g_R^{}(\psi_N(r))\|_{H^{s+2}}^2),$$
which implies that 
$$\int_{t_n}^{t_{n+1}} \E\left( \left\| \big(S(t_{n+1},r)-I\big)g_R^{}(\psi_N(t_n)) \right\|_{H^s}^2 \right)dr
\lesssim (\delta t)^2 \left(1+\E \left( \left \|u_0 \right \|_{H^{s+2}}^2\right)\right).$$
The boundedness of $Dg$ and Taylor expansion yield the existence of $\eta'$ such that 
\begin{align*}
&\E\left (\left \|\int_{t_n}^{t_{n+1}}(S(t_{n+1},r)-I)\big(g_R^{}(\psi_N(r)-g(\psi_N(t_n)))\big)\,dr\right \|_{H^s}^2 \right )\\
&\lesssim \E \left ( \left \|\int_{t_n}^{t_{n+1}}((S(t_{n+1},r)-I)Dg_R^{}(\eta')) \left(\bi\int_{t_n}^{r}g_R^{}(\psi_N(\sigma))\,d\sigma dr\right)  \right \|_{H^s}^2 \right)\\
&\lesssim (\delta t)^2 \int_{t_n}^{t_{n+1}}\int_{t_n}^{r} \E(\|g_R^{}(\psi_N(\sigma)\|_{H^s}^2)\,d\sigma dr
\lesssim (\delta t)^4 \left( 1+\E\left( \|u_0\|_{H^s}^2 \right) \right).
\end{align*}
The above inequalities imply that 
\begin{align}\label{t3}
\E(\|e'_{n+1,3}\|_{H^s}^2)
\lesssim (\delta t)^3\left(1+\E \left( \left \|u_0 \right \|_{H^{s+2}}^2\right)+\E\left( \|u_0\|_{H^s}^2 \right)\right).
\end{align}
Next we turn to the correlated terms in \eqref{od6}.
Cauchy-Schwarz inequality combined with the estimations on $\E(\|e'_{n+1,2}\|_{H^s}^2)$ and $\E(\|e'_{n+1,3}\|_{H^s}^2)$ lead to
\begin{align}\label{t4}
&|\E(e'_{n+1,2},\,e'_{n+1,3})_{H^s}|  \nonumber  \\
&\lesssim \E(\|e'_{n+1,2}\|_{H^s}^2)^{\frac12}\E(\|e'_{n+1,3}\|_{H^s}^2)^{\frac12}\nonumber  \\
&\lesssim (\delta t)^{\frac 52}E(\|e_n\|_{H^s}^2)^{\frac 12}
\left(1+\E\left( \|u_0\|_{H^{s+2}}^2 \right)\right)^{\frac 12}
+(\delta t)^3 \left(1+\E\left( \|u_0\|_{H^{s+2}}^2 \right) \right)  \nonumber  \\
&\lesssim \delta t\,E(\|e_n\|_{H^s}^2)
+(\delta t)^3 \left(1+\E\left( \|u_0\|_{H^{s+2}}^2 \right) \right).
\end{align}
For the term $\E(e'_{n+1,1},e'_{n+1,2})_{H^s}$, we have
\begin{align*}
&\E(e'_{n+1,1},e'_{n+1,2})_{H^s} \\
&=\E\left (e_{n},\,\int_{t_n}^{t_{n+1}}\big(g_R^{}(u(t_n))-g_R^{}(u(r))\big)dr \right )_{H^s}\\
&\quad+\E\left (e_{n},\,\int_{t_n}^{t_{n+1}}\big(S(t_{n},r)-I\big)\big(g_R^{}(u(t_n))-g_R^{}(u(r))\big)dr \right )_{H^s}\\
&\quad+\E \left (e_{n},\,\int_{t_n}^{t_{n+1}}S(t_n,r)\big(g_R^{}(u^n))-g_R^{}(u(t_n))\big)dr \right)_{H^s}\\
&\quad+\E \left(e_{n},\,\int_{t_n}^{t_{n+1}}S(t_n,r)\big(g_R^{}(\psi_N(r))-g_R^{}(u^n)\big)dr \right)_{H^s}
=: B_1+ B_2+B_3+B_4.
\end{align*}
Since $g_R^{}\in \CC_b^2$, by Lemma 2.10 in \cite{BDD15}, the term $B_1$ can be estimated similarly as
\begin{align*}
B_1 &\le \left|\E \left( e_n,\,\int_{t_n}^{t_{n+1}}Dg_R^{}(u(t_n))\big((S(r,t_n)-I)u(t_n)\big)\,dr\right)_{H^s}\right|  \\
& \quad+\left|\E \left(e_n,\,\int_{t_n}^{t_{n+1}} \int_{t_n}^rS(r,\sigma)g_R^{}(u(\sigma))d\sigma)\,dr\right)_{H^s}\right|
+C(\delta t)^2\E(\|e_n\|_{H^s}^2)^{\frac12}\\
&\lesssim (\delta t)^{\frac 12}\E(\|e_n\|_{H^s}^2)^{\frac12}
\left(\int_{t_n}^{t_{n+1}}\E(\|\E\big(S(r,t_n)-I\big) u(t_n)\|_{H^s}^2) \,dr\right) ^{\frac 12}+(\delta t)^2\E(\|e_n\|_{H^s}^2)^{\frac12}.
\end{align*}
Since $S(r,t_n)$ is independent of $u(t_n)$ and $Dg(u(t_n))$, by Parseval theorem, we have
\begin{align*}
&\E(\|\E(S(r,t_n)-I) u(t_n)\|_{H^s}^2\\
&=\E \left(\int_{\R}|\E(\exp(\bi(\beta(t_n)-\beta(r))|\xi|^2)-1|^2|\hat u(t_n)|^2(1+\xi^2)^s d\xi \right)\\
&\lesssim (\delta t)^2\sup_{t\in [0,T]}\E(\|u(t)\|_{H^{s+4}}^2)
\lesssim (\delta t)^2\E(\|u_0\|_{H^{s+4}}^2).
\end{align*}
Then $B_1$ can be controlled by 
\begin{align*}
B_1
\lesssim \delta t\,\E(\|e_n\|_{H^s}^2)+(\delta t)^3(1+\E(\|u_0\|_{H^{s+4}}^2)).
\end{align*}
Parseval theorem yields that for $\FFF_{t_n}$ measurable function $v$, 
\begin{align*}
\E\left(\|\big(S(t_{n},r)-I\big)v\|_{H^s}^2\right)\le C\,\delta t\, \E\left(\|v\|_{H^{s+2}}^2\right). 
\end{align*}
Applying the above inequality to term $B_2$, combining with $g_R^{}\in \CC_b^2$, we obtain
\begin{align*}
B_2  &\lesssim (\delta t)^2 \E(\|e_n\|_{H^s}^2)^{\frac 12}\sup_{t\in[0,T]}\E \left(\| u(t)\|_{H^{s+4}}^2 \right)^{\frac 12}\\
& \lesssim \delta t\,\E(\|e_n\|_{H^s}^2)+ (\delta t)^3(1+\E(\|u_0\|_{H^{s+4}}^2))
\end{align*} 
Analogously, 
\begin{align*}
B_3 \lesssim \delta t\,\E(\|e_n\|_{H^s}^2),
\end{align*}
and 
\begin{align*}
B_4 
\lesssim \delta t\,\E(\|e_n\|_{H^s}^2)+(\delta t)^3(1+\E(\|u_0\|_{H^{s}}^2)).
\end{align*}
The estimates of $B_1$--$B_4$ imply
\begin{align}\label{t5}
\E(e'_{n+1,1},e'_{n+1,2})_{H^s}\lesssim 
\delta t\,\E(\|e_n\|_{H^s}^2)+ (\delta t)^3(1+\E(\|u_0\|_{H^{s+4}}^2)).
\end{align}
For the term $\E(e'_{n+1,1},e'_{n+1,3})$, we split it as 
\begin{align*}
&|\E(e'_{n+1,1},e'_{n+1,3})|\\
&\le C \left|\E \left (e_n,\,\int_{t_n}^{t_{n+1}}\big(S(t_{n},r)-I)g_R^{}(u^n\big)\,dr\right)_{H^s}\right|\\
&\quad+C \left|\E \left(e_n,\,\int_{t_n}^{t_{n+1}}\big(S(t_{n},r)-I\big)\big(g_R^{}(\psi_N(r))-g_R^{}(u^n)\big)\,dr\right)_{H^s}\right|\\
&:=B_5+B_6.
\end{align*}
Since $S(t_{n},r)-I$ is independent of $g(u^n)$,
\begin{align*}
B_5 
&\lesssim  (\delta t)^{\frac 12} \E(\|e_n\|_{H^s}^2)^{\frac12}\left(\int_{t_n}^{t_{n+1}}\E\left(\|\E\big(S(t_{n},r)-I\big)g_R^{}(u^n)\|_{H^s}^2 \right)dr\right)^{\frac 12}.
\end{align*}
Plancherel theorem combining the facts that 
\begin{align*}
\E(\exp(i(\beta(r)-\beta(t_n))|\xi|^2))=\exp \left( -\frac{(r-t_n)}2|\xi|^4\right),\quad 
|\exp(x)-1|\lesssim |x|
\end{align*} 
yield
\begin{align*}
&\E\left(\|\E(S(t_{n},r)-I)g_R^{}(u^n)\|_{H^s}^2 \right)\\
&\lesssim  \E\left( \int_{\R}|\E(\exp (\bi(\beta(r)-\beta(t_n))|\xi|^2)-1|^2|\hat g_R^{}(u^n)|^2(1+\xi^2)^s\,d\xi\right)\\
&\lesssim (r-t_n)^2\left(1+\E(\|u^n\|_{H^{s+4}}^2)\right), 
\end{align*}
from which we have
\begin{align*}
B_5
&\lesssim (\delta t)^{\frac12}\E(\|e_n\|_{H^s}^2)^{\frac12}
\left( \int_{t_n}^{t_{n+1}} (r-t_n)^2\left(1+\E(\|u^n\|_{H^{s+4}}^2)\right) dr\right)^{\frac12}\\
&\lesssim (\delta t)^2\E(\|e_n\|_{H^s}^2)^{\frac12}(1+\E(\|u_0\|_{H^{s+4}}^2)^{\frac12})\\
&\lesssim \delta t\,\E(\|e_n\|_{H^s}^2)
+(\delta t)^3 \left(1+\E(\|u_0\|_{H^{s+4}}^2)\right).
\end{align*}
Similarly,
\begin{align*}
B_6
\lesssim (\delta t)^2\E(\|e_n\|_{H^s}^2)^{\frac12}.
\end{align*}
Therefore,  
\begin{align}\label{t6}
|\E(e'_{n+1,1},\,e'_{n+1,3})_{H^s}|
\lesssim \delta t\,\E(\|e_n\|_{H^s}^2)
+(\delta t)^3 \left(1+\E(\|u_0\|_{H^{s+4}}^2)\right).
\end{align}
Combining  \eqref{t1}--\eqref{t6}, we deduce 
\eqref{k-od-spl}. 
The Gronwall inequality completes the proof.
\qed
\end{proof}

\begin{rk}
Our above arguments can also be applied to presenting a rigorous proof of first order of convergence for the pseudo-spectral splitting scheme 
\begin{align*}
\tilde u_R^n=\exp(\bi \,\delta W_n \,\Delta) u_R^n,\quad
u_R^{n+1}=\exp(\bi \,\delta t\, \theta_R(\tilde u_R^{n})|\tilde u_R^{n}|^2)\tilde u_R^{n},
\end{align*}
proposed in \cite{Mar06}.
\end{rk}

We now derive the order in probability of the schemes \eqref{mid-nls} and \eqref{t-spl} in terms of Proposition \ref{ts-order} and Proposition \ref {od-spl}.
We note that this method is also used in \cite{BDD15} to study a Crank-Nicolson scheme for Eq. \eqref{s-nls}.
To make it clear, denote by $\{u_{\bf{mid}}^n\}_{0\le n \le N}$ the solution of the scheme \eqref{mid-nls}  and $\{u_{\bf{Lie}}^n\}_{0\le n \le N}$ the solution of the scheme \eqref{t-spl}, respectively.

\begin{tm}\label{pod}
\hspace{10cm}
\begin{enumerate}
\item
Assume that $u_0\in H^{s+10}(\R)$. 
Then for any $0\leq n\leq N$,
\begin{align}\label{ord-mid}
\lim_{M\to \infty}\mathbb P \Big(\|u(t_n)-u_{\bf{mid}}^n\|_{H^s}\ge M \delta t\Big)=0.
\end{align}

\item
Assume that $u_0\in H^{s+4}(\R)$.
Then for any $0\leq n\leq N$,
\begin{align}\label{ord- Lie}
\lim_{M\to \infty}\mathbb P \Big(\|u(t_n)-u_{\bf{ Lie}}^n\|_{H^s}\ge M\delta t\Big)=0.
\end{align}
\end{enumerate}
\end{tm}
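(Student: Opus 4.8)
The plan is to combine the mean-square convergence estimates for the truncated schemes (Proposition \ref{ts-order} for the mid-point scheme and Proposition \ref{od-spl} for the Lie splitting scheme) with a localization argument, exactly along the lines of the order-in-probability technique of \cite{BDD15}. Since both assertions are established in the same way, I would treat them simultaneously, writing $u^n$ for either $u_{\bf{mid}}^n$ or $u_{\bf{Lie}}^n$, and $u_R^n$ for the solution of the corresponding truncated scheme (\eqref{ts-mid} or \eqref{t-spl}), with $u_R$ the solution of the truncated equation \eqref{ts-nls}. The first step is to apply the Chebyshev (Markov) inequality to the truncated error: by Proposition \ref{ts-order} (resp. Proposition \ref{od-spl}) there is a constant $C=C(T,R,\|u_0\|_{H^{s+10}})$ (resp. with $H^{s+4}$) such that $\E(\|u_R(t_n)-u_R^n\|_{H^s}^2)\le C^2(\delta t)^2$, whence
\begin{align*}
\PP\Big(\|u_R(t_n)-u_R^n\|_{H^s}\ge M\delta t\Big)
\le \frac{\E(\|u_R(t_n)-u_R^n\|_{H^s}^2)}{M^2(\delta t)^2}
\le \frac{C^2}{M^2},
\end{align*}
and for each fixed $R$ the right-hand side tends to $0$ as $M\to\infty$.

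Next I would pass from the truncated objects to the genuine ones by localization. The point is that the cut-off $\theta_R$ is inactive as long as the relevant $H^s$-norms stay below $R$, since $\theta_R(\cdot)=1$ on the ball of radius $R$. Concretely, I would introduce the event
\begin{align*}
\Omega_R=\Big\{\sup_{0\le t\le T}\|u(t)\|_{H^s}\le R\Big\}\cap\Big\{\max_{0\le k\le N}\|u_R^k\|_{H^s}\le R\Big\},
\end{align*}
on which $u_R$ solves the original Eq. \eqref{s-nls} and each $u_R^k$ solves the un-truncated scheme; by the well-posedness of Eq. \eqref{s-nls} and the unique solvability of the scheme this forces $u=u_R$ on $[0,T]$ and $u^k=u_R^k$ for every $k$. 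Consequently
\begin{align*}
\PP\Big(\|u(t_n)-u^n\|_{H^s}\ge M\delta t\Big)
\le \PP\Big(\|u_R(t_n)-u_R^n\|_{H^s}\ge M\delta t\Big)+\PP(\Omega_R^c)
\le \frac{C^2}{M^2}+\PP(\Omega_R^c).
\end{align*}

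It then remains to let $M\to\infty$ and afterwards $R\to\infty$: for every fixed $R$ one obtains $\limsup_{M\to\infty}\PP(\|u(t_n)-u^n\|_{H^s}\ge M\delta t)\le \PP(\Omega_R^c)$, so both \eqref{ord-mid} and \eqref{ord- Lie} follow as soon as $\PP(\Omega_R^c)\to 0$ when $R\to\infty$. The first constituent of $\Omega_R^c$ is controlled by the almost sure finiteness of $\sup_{t\le T}\|u(t)\|_{H^s}$, itself a consequence of the existence of a unique solution $u\in\CC(\R_+;H^s)$ recalled at the start of Section \ref{SSMS}; hence $\PP(\sup_{t\le T}\|u(t)\|_{H^s}>R)\to 0$. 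I expect the genuine obstacle to be the second constituent, namely controlling $\PP(\max_{k}\|u_R^k\|_{H^s}>R)$ and thereby justifying the coincidence of the truncated and un-truncated iterates, which is the delicate point for the implicit mid-point scheme. I would resolve this by the splitting
\begin{align*}
\max_{0\le k\le N}\|u_R^k\|_{H^s}\le \sup_{0\le t\le T}\|u_R(t)\|_{H^s}+\max_{0\le k\le N}\|u_R(t_k)-u_R^k\|_{H^s},
\end{align*}
bounding the first summand by the uniform moment estimate on $u_R$ established before Proposition \ref{ts-order} and the second in probability through the Chebyshev estimate above, so that enlarging $R$ simultaneously defeats both contributions to $\PP(\Omega_R^c)$ and completes the proof.
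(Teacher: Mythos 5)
Your strategy---Chebyshev on the truncated error from Propositions \ref{ts-order} and \ref{od-spl}, plus identification of the truncated and un-truncated objects on an event where all relevant $H^s$-norms stay below $R$---is essentially the paper's own argument (which follows \cite{BDD15}; the paper encodes your event $\Omega_R$ via the stopping time $\tau_R$ and the definition $u^n_{\delta t}=u^n_R$ for $t_n\le\tau_R$). However, your final step has a genuine gap. You propose to control $\PP\big(\sup_{t\le T}\|u_R(t)\|_{H^s}>R\big)$ by Chebyshev using the moment bound $\E\big(\sup_{t\le T}\|u_R(t)\|_{H^{s+10}}^p\big)\le C(T,R,\|u_0\|_{H^{s+10}})$ stated before Proposition \ref{ts-order}. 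But that constant depends on $R$: it comes from a Gronwall argument whose Lipschitz constant $C(\sigma,R)$ (Lemma \ref{lip}) blows up as $R\to\infty$, so the Chebyshev bound $C(T,R,\cdot)/R^p$ need not tend to zero when $R\to\infty$. The same defect hits your second summand: Chebyshev gives $\PP\big(\max_k\|u_R(t_k)-u_R^k\|_{H^s}>R/2\big)\lesssim C(R)^2\,T\,\delta t/R^2$, which, at fixed $\delta t$ (your order of limits is $M\to\infty$ first, then $R\to\infty$), also need not vanish since $C(R)$ is unbounded in $R$. So the claim that ``enlarging $R$ simultaneously defeats both contributions to $\PP(\Omega_R^c)$'' is not justified.

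The repair, which is what the paper actually does, is to never let $R$-dependent moment constants compete with $R\to\infty$. For the continuous part: on $\{\sup_{t\le T}\|u(t)\|_{H^s}\le R\}$ the exact solution $u$ solves the truncated equation, hence $u_R=u$ there by uniqueness, and therefore $\PP\big(\sup_{t\le T}\|u_R(t)\|_{H^s}>R\big)\le\PP\big(\sup_{t\le T}\|u(t)\|_{H^s}>R\big)$, which tends to zero by the almost sure finiteness of $\sup_{t\le T}\|u(t)\|_{H^s}$---no moments of $u_R$ are needed. For the discrete part, $R$ must be fixed \emph{first}: Proposition \ref{od-spl} gives $\sup_n\|u_R(t_n)-u_R^n\|_{H^s}\to0$ in probability as $\delta t\to0$, which, as in \cite{BDD15}, yields boundedness in probability of the numerical solution uniformly in $\delta t$. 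This also exposes the quantifier-order issue in your write-up: the content of the theorem is uniformity in $\delta t$ (for fixed $\delta t$ the statement is trivial for any a.s.\ finite error), and the correct order is: given $\ee>0$, choose $R$ from the a.s.\ bound on $u$, then take $\delta t$ small and $M$ large relative to $C(R)$; your ``fix $\delta t$, send $M\to\infty$, then $R\to\infty$'' cannot work when the constants grow with $R$.
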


\begin{proof}
We only prove the estimate \eqref{ord- Lie} and the same arguments can be applied to \eqref{ord-mid}.
For simplicity, we omit the index and define a stopping time
\begin{align*}
\tau_R= \inf_{0\leq n\leq N} \left\{t_n:\ \left\| u_R^{n-1} \right\|_{H^s}\ge R
\quad  \text{or} \quad \left\| u_R^n \right\|_{H^s}\ge R
\right\},
\end{align*}
and the discrete solution $u^n_{\delta t}=u^n_{R}$ if $t_n \le\tau_R$.
By Proposition \ref{od-spl}, we have for any $R>0$,
\begin{align*}
\E\left(\sup_{0\le n\le N}\|u_R(t_n)-u^n_R \|_{H^s}^2 \right)
\le \sum_{0\le n\le N}\E\left(\|u_R(t_n)-u^n_R \|_{H^s}^2\right)
\le C(R)T(\delta t).
\end{align*}
This implies that $\sup_{0\le n\le N}\|u_R(t_n)-u^n_R \|_{H^s}$ converges to $0$ in probability as $\delta t\rightarrow 0$ and in turn yields $\lim\limits_{K \to \infty} \PP\left(\sup\limits_{n\le \N}\|u^n_{\delta t}\|\ge K\right)=0$(see \cite{BDD15}).
Applying Chebyshev inequality, we have 
\begin{align*}
&\PP\Big(\|u(t_n)-u^n_{\delta t} \|_{H^s}\ge M\delta t\Big)\\
&\le \PP\left(\sup_{0\le n\le N}\| u(t_n)\|_{H^s}\ge R\right)
+\PP\left(\sup_{0\le n\le N}\| u^n_{\delta t}\|_{H^s}\ge R\right)
+\PP\Big(\|u_R(t_n)-u^n_R \|_{H^s}\ge M\delta t\Big)\\
&\le \PP\left(\sup_{0\le n\le N}\| u(t_n)\|_{H^s}\ge R\right)+\PP\left(\sup_{0\le n\le N}\| u^n_{\delta t}\|_{H^s}\ge R\right)+\frac {\E(\|u_R(t_n)-u^n_R\|_{H^s})^2}{M^2(\delta t)^2},
\end{align*}
which converges to 0 as $R$ and $M$ tend to $\infty$ by
the boundedness of $u^n_{\delta t}$ and $u(t_n)$ together with Proposition \ref{od-spl}.
\qed
\end{proof}

\begin{rk}
If the high dimension problem is well-posed, the above arguments shows that the convergence order of the 
 mid-point scheme \eqref{mid-nls} and the  Lie splitting scheme \eqref{t-spl} is still one in probability. 
\end{rk}

\subsection{Multi-symplectic scheme}

To construct stochastic multi-symplectic integrators, we use the classical centered finite difference in spatial direction under homogenous Dirichlet boundary conditions combined with the mid-point scheme \eqref{mid-nls} in the temporal discretization.
We remark that although the temporal splitting scheme is explicit, its corresponding full discretization which uses the spatial centered finite difference could not preserve the multi-symplectic structure of the original equation.  Indeed,
after this Lie splitting approach, Eq. \eqref{spl2} possesses the multi-symplectic  conservation law. However, the fact that Eq. \eqref{spl1} only preserves the symplectic structure can not guarantee this splitting-based full discretization to inherit  the multi-symplectic  conservation law.
 
We work on the spatial domain $[-L_x,L_x]$ and use the uniform mesh generation.  
The spatial mesh step is $\delta x= \frac {2L_x}{N_x}$, and the time step is again $\delta t$. 
Then the temporal and spatial grid points are
$$(t_n, x_j)=(n\delta t, -L_x+j\delta x),\quad (n,j)\in \{0,1\cdots,N\}\times  \{0,1,\cdots,N_x\}.$$
For convenience, denote $z=(p,q,v,w), v=p_x, w=q_x$ and define
\begin{align*}
&\delta_t^+z^n:=\frac{z^{n+1}-z^n}{\delta t},\quad 
\delta_x^+z_j:=\frac{z_{j+1}-z_j}{\delta x},\quad 
\delta_x^-z_j:=\frac{z_j-z_{j-1}}{\delta x}.
\end{align*}
The full discretization of Eq. \eqref{s-nls} is 
\begin{align}\label{full}
\bi\delta_t^+u_j^n+\frac {\delta W_n}{\delta t} \delta_x^+\delta_x^-(u_j^{n+\frac 12})
+|u_j^{n+\frac 12}|^{2\sigma}u_j^{n+\frac 12}=0.
\end{align}

For the multi-symplectic Hamiltonian system, this scheme is equivalent to 
\begin{align}\label{hfull}
M\delta_t^+z_j^n+K^+\delta_x^-z_j^{n+\frac12}\frac {\delta W_n}{\delta t}+K^-\delta_x^+z_j^{n+\frac 12}\frac {\delta W_n}{\delta t}
=\nabla_zS_1(z_j^{n+\frac12})+\nabla_zS_2(z_j^{n+\frac12})\frac {\delta W_n}{\delta t},
\end{align}
where
\begin{align*}
K^+=\left(\begin{array}{cccc}0 & 0 & 1 & 0\\0 & 0 & 0 & 1\\0 & 0 & 0 & 0 \\0 & 0 & 0 & 0\end{array}\right),
\qquad  K^-=\left(\begin{array}{cccc}0 & 0 & 0 & 0\\0 & 0 & 0 & 0\\-1 & 0 & 0 & 0 \\0 & -1 & 0 & 0\end{array}\right).
\end{align*}
As expected, the full discretization \eqref{hfull} preserves the discrete multi-symplectic conservation law and the discrete charge conservation law.

\begin{tm}\label{fcl}
The scheme \eqref{full} is a stochastic multi-symplectic integrator, i.e., it satisfies the discrete stochastic multi-symplectic conservation law:
\begin{align}\label{mul-sym}
\delta_t^+(dz_j^n\wedge M^+dz_j^n)+\delta_x^+(dz_{j-1}^{n+\frac12}\wedge K^-dz_j^{n+\frac 12})\frac {\delta W_n}{\delta t}=0,\quad \text{a.s.}  
\end{align}
where $$M^+=\left(\begin{array}{cccc}0 & -1 & 0 & 0\\0 & 0 & 0 & 0\\0 & 0 & 0 & 0 \\0 & 0 & 0 & 0\end{array}\right).$$
 Meanwhile, the scheme \eqref{full} preserves the discrete charge conservation law:
\begin{align}\label{mul-ccl}
\delta x\sum_j |u_{j}^{n+1}|^2=\delta x\sum_j|u_j^{n}|^2, \quad \text{a.s.} 
\end{align}

\end{tm}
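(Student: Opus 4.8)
The plan is to prove Theorem \ref{fcl} in two independent parts, mirroring the structure of the continuous case. For the discrete multi-symplectic conservation law \eqref{mul-sym}, I would imitate the proof of Theorem \ref{ts-cl} but carry out every differential-geometric manipulation at the discrete level, replacing $d_t$ and $\partial_x$ by the difference operators $\delta_t^+$ and $\delta_x^\pm$. For the discrete charge conservation law \eqref{mul-ccl}, I would work directly with the scalar scheme \eqref{full}, test against the discrete conjugate and extract the imaginary part, exactly as in the semi-discrete case.

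\textbf{Discrete multi-symplectic law.} First I would take the exterior differential on the phase space of the compact scheme \eqref{hfull}. Since the exterior differential commutes with the discrete difference operators and with the stochastic increment $\delta W_n$ (these are all linear operations), this yields
\begin{align*}
M\delta_t^+ dz_j^n+K^+\delta_x^- dz_j^{n+\frac12}\frac {\delta W_n}{\delta t}+K^-\delta_x^+ dz_j^{n+\frac12}\frac {\delta W_n}{\delta t}
=\nabla_{zz}S_1(z_j^{n+\frac12})\,dz_j^{n+\frac12}+\nabla_{zz}S_2(z_j^{n+\frac12})\,dz_j^{n+\frac12}\frac {\delta W_n}{\delta t}.
\end{align*}
Then I would wedge $dz_j^{n+\frac12}$ onto this identity from the left. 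The two Hessian terms on the right vanish because $\nabla_{zz}S_1$ and $\nabla_{zz}S_2$ are symmetric. The remaining task is to rewrite each of the three terms on the left as a discrete total difference. The term $dz_j^{n+\frac12}\wedge M\delta_t^+ dz_j^n$ should, after using the skew-symmetry of $M$ and a discrete Leibniz-type identity, collapse to $\delta_t^+(dz_j^n\wedge M^+ dz_j^n)$ with the upper-triangular part $M^+$ appearing because the midpoint $z_j^{n+\frac12}=\frac12(z_j^n+z_j^{n+1})$ splits the skew form $M=M^+-(M^+)^T$. The two spatial terms, combining $K^+\delta_x^-$ and $K^-\delta_x^+$, should likewise telescope into the single spatial difference $\delta_x^+(dz_{j-1}^{n+\frac12}\wedge K^- dz_j^{n+\frac12})$, again using $K=K^++K^-$ and the skew-symmetry of $K$.

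\textbf{Discrete charge law.} This part is routine: multiplying \eqref{full} by $\delta x\,\overline{u_j^{n+\frac12}}$, summing over $j$ and taking imaginary parts, the nonlinear term $|u_j^{n+\frac12}|^{2\sigma}u_j^{n+\frac12}\overline{u_j^{n+\frac12}}$ is real and drops out, the discrete Laplacian term is real after summation by parts under the homogeneous Dirichlet boundary (so its contribution to the imaginary part vanishes), and what survives telescopes to $\delta x\sum_j(|u_j^{n+1}|^2-|u_j^n|^2)=0$.

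\textbf{The main obstacle} I expect is the bookkeeping in the multi-symplectic part: verifying that the discrete wedge manipulations with the midpoint rule in time and the staggered $\delta_x^\pm$ differences in space produce \emph{exactly} the matrices $M^+$ and the index shift $dz_{j-1}^{n+\frac12}$ in \eqref{mul-sym}, rather than some symmetrized variant. The asymmetry between $M^+$ (which is not skew) and the full skew $M$ is the crux, and getting the discrete summation-by-parts on the $K^\pm$ terms to align the spatial indices correctly requires care; this is where a sign or an index error would most easily creep in, so I would check it by summing \eqref{mul-sym} over $j$ and confirming it recovers the discrete symplectic law consistent with the mid-point scheme's preservation of \eqref{dsss}.
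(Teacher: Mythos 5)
Your proposal is correct and follows essentially the same route as the paper: take the exterior differential of the compact form \eqref{hfull}, wedge with $dz_j^{n+\frac12}$ so the symmetric Hessians of $S_1,S_2$ drop out, collapse the temporal term to $\delta_t^+(dz_j^n\wedge M^+dz_j^n)$ via the midpoint splitting and skew-symmetry of $M$, telescope the $K^\pm$ terms into the single spatial difference, and obtain the charge law by testing \eqref{full} against $\bar u_j^{n+\frac12}$ and taking imaginary parts (using that $\delta W_n$ is real). The bookkeeping you flag as the main obstacle is exactly the content of the paper's two displayed chains of identities, so no new idea is missing.
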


\begin{proof}
Taking differential in the phase space on \eqref{hfull}, we have
\begin{align*}
&M\delta_t^+dz_j^n+K^+\delta_x^-dz_j^{n+\frac12}\frac {\delta W_n}{\delta t}+K^-\delta_x^+dz_j^{n+\frac 12}\frac {\delta W_n}{\delta t}\\
&=\nabla_{zz}S_1(z_j^{n+\frac12})dz_j^{n+\frac12}+\nabla_{zz}S_2(z_j^{n+\frac12})dz_j^{n+\frac12}\frac {\delta W_n}{\delta t}.
\end{align*} 
Then wedging the above equality by  $dz_j^{n+\frac12}$, we obtain
\begin{align*}
dz_j^{n+\frac12}\wedge M\delta_t^+dz_j^n+dz_j^{n+\frac12} \wedge K^+\delta_x^-dz_j^{n+\frac12}\frac {\delta W_n}{\delta t}+dz_j^{n+\frac12}\wedge K^-\delta_x^+dz_j^{n+\frac 12}\frac {\delta W_n}{\delta t}=0.
\end{align*}
The fact that $dz_j^{n+\frac12}=\frac 12(dz_j^{n}+dz_j^{n+1})$ combined with the skew-symmetry of $M$ leads to the temporal symplectic structure
\begin{align*}
dz_j^{n+\frac12}\wedge M\delta_t^+dz_j^n
=\frac12(dz_{j+1}^n\wedge M d\delta_t^+dz_j^n+dz_j^n\wedge M d\delta_t^+dz_j^n)
=\delta_t^+(dz_j^n\wedge M^+dz_j^n).
\end{align*}
Next we deal with the spatial symplectic structure.
Due to the the skew-symmetry of $K$, we have
\begin{align*}
&dz_j^{n+\frac12} \wedge K^+\delta_x^-dz_j^{n+\frac12}\frac {\delta W_n}{\delta t}+dz_j^{n+\frac12}\wedge K^-\delta_x^+dz_j^{n+\frac 12}\frac {\delta W_n}{\delta t}\\
&=\frac {\delta W_n}{\delta t} \left( dz_j^{n+\frac12} \wedge K^+\delta_x^+dz_{j-1}^{n+\frac12}+dz_j^{n+\frac12}\wedge K^-\delta_x^+dz_j^{n+\frac 12} \right)\\
&=\frac {\delta W_n}{\delta t} \left( \delta_x^+dz_{j-1}^{n+\frac12} \wedge K^-dz_j^{n+\frac12}+dz_j^{n+\frac12}\wedge K^-\delta_x^+dz_j^{n+\frac 12} \right)\\
&=\delta_x^+\left( dz_{j-1}^{n+\frac12}\wedge K^-dz_j^{n+\frac 12} \right)\frac {\delta W_n}{\delta t}.
\end{align*}
We conclude \eqref{mul-sym} by summing up the above temporal and spatial symplectic structures.

Multiplying the full discretization \eqref{full} by $\bar u_j^{n+\frac12}$, summing over all spatial grids and then taking the imaginary part, we conclude \eqref{mul-ccl} combined with $\delta W_n$ is real valued.
\qed
\end{proof}
\begin{rk}
	The temporal discretization error for Eq. \eqref{full} is obtain in Theorem \ref{pod}, i.e, order one in probability. By the truncated argument, the spatial discretization error is the same as in the deterministic case (see e.g. \cite {BDD15}).
\end{rk}

\section{Numerical Experiments}
\label{exp}

One purpose of this section, via simulating the temporal orders of convergence of the mid-point scheme \eqref{mid-nls} and the splitting scheme \eqref{nspl} both with spatial centered difference discretization, is to verify the theoretical results in Theorem \ref{pod}. 
Another purpose is to show the good longtime behavior of the stochastic symplectic and multi-symplectic schemes.

We consider the schemes \eqref{mid-nls} and \eqref{nspl} with $\sigma=1$ for simplicity. 
The numerical spatial domain is $[-L_x,L_x]=[-30,30]$ and the initial datum is chosen to be Gaussian: $u(0,x)=\exp(-3x^2)$, 
$x\in [-30,30]$. 
We take the spatial mesh $\delta x = 0.05$ and compute a reference solution $u_{ref}$ on a fine mesh with $\delta t= 2^{-16}$.
 In Fig. \ref{fig-ord}, we plot the convergence curves based on the errors 
 $\|u_{ref}-u_{\delta t}\|_{L^2}$
 at time $T=0.5$ with $\delta t=2^p \delta t_{ref}$, $p=3,\cdot\cdot\cdot,7$. We can see that the slopes of the our schemes are both close to 1.
 This observation verifies the theoretical results in Section \ref{num}.

 \begin{figure}
\centering
\includegraphics[width=4in]{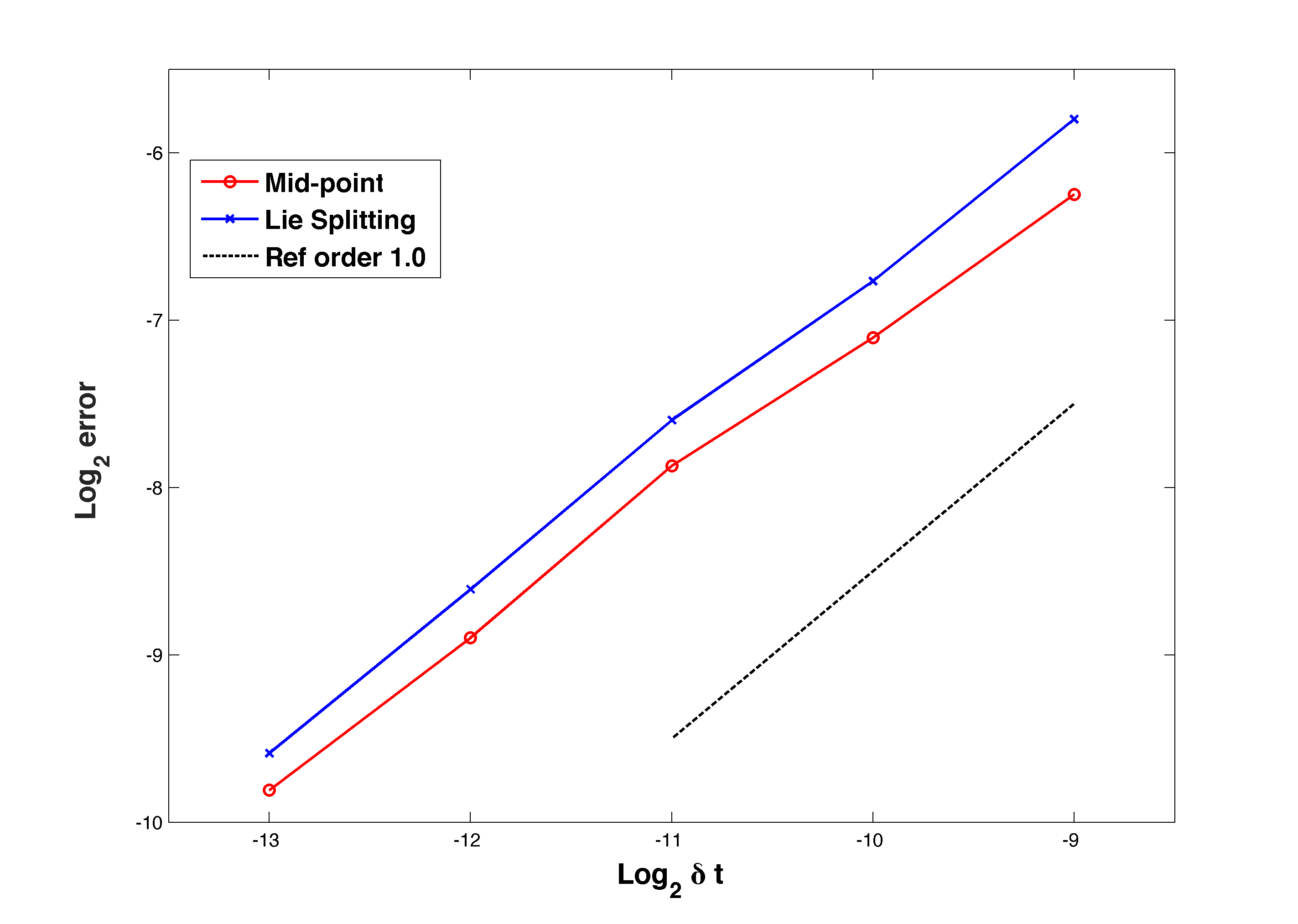}\\
\caption{Rates of convergence of the mid-point scheme (blue) and the Lie splitting scheme (red) for NLS with white dispersion}
\label{fig-ord}
\end{figure}

 \begin{figure}
\begin{minipage}[t]{0.5\linewidth}
\centering
\includegraphics[width=2.5in]{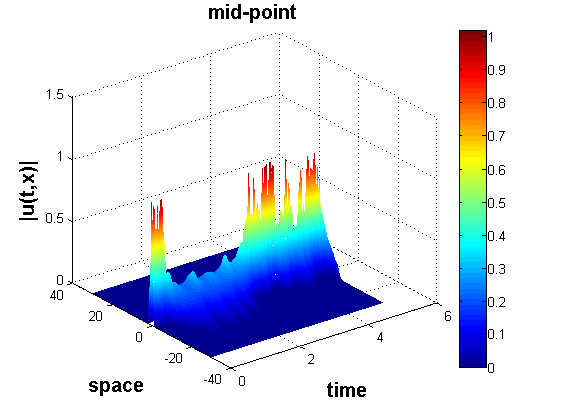}\\
\includegraphics[width=2.5in]{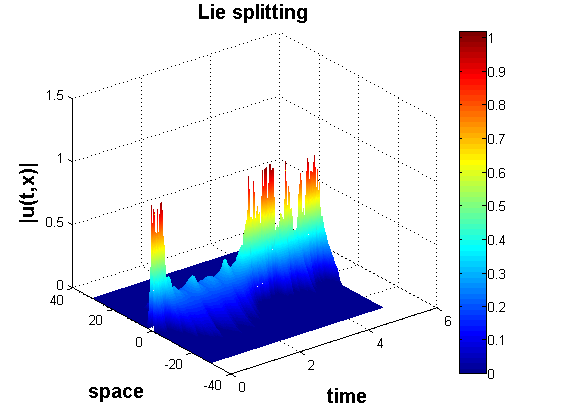}\\
\end{minipage}%
\begin{minipage}[t]{0.5\linewidth}
\centering
\includegraphics[width=2.5in]{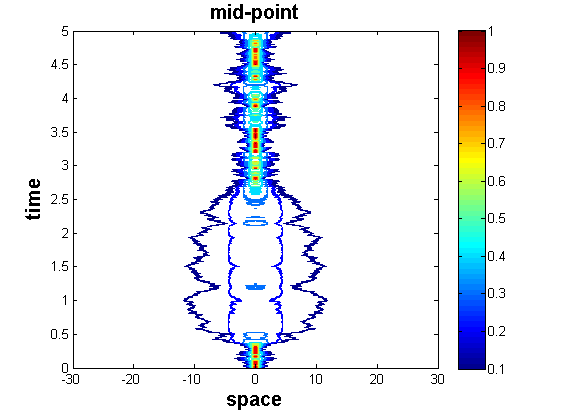}\\
\includegraphics[width=2.5in]{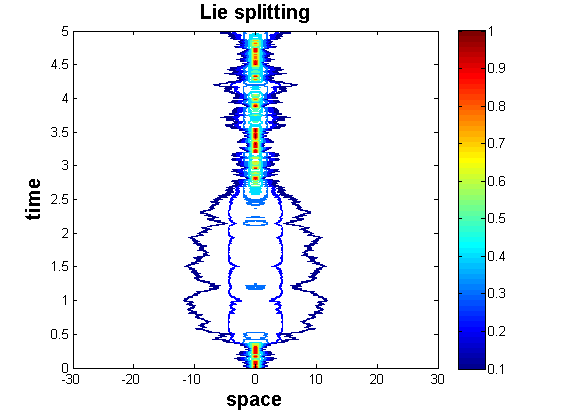}\\
\end{minipage}
\caption{The profile of the space-time evolution of $|u(t,x)|$ (left) and contour plot of $|u(t,x)|$ (right) for the stochastic multi-symplectic scheme \eqref{mid-nls} (up) and the stochastic symplectic splitting scheme \eqref{nspl} (down) in short time.}
\label{fig-short}
\end{figure}

 \begin{figure}
\begin{minipage}[t]{0.5\linewidth}
\centering
\includegraphics[width=2.5in]{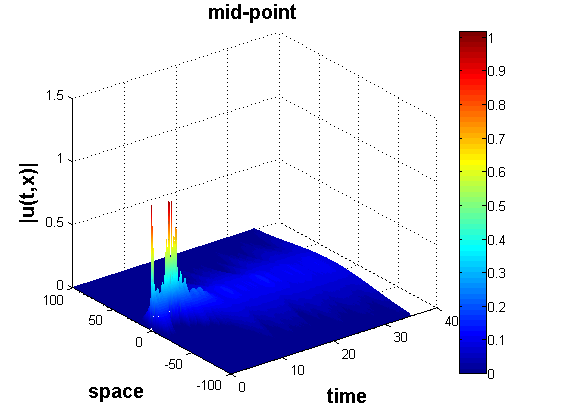}\\
\includegraphics[width=2.5in]{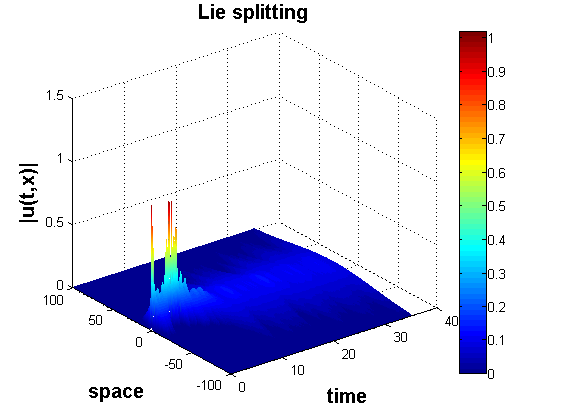}\\
\end{minipage}%
\begin{minipage}[t]{0.5\linewidth}
\centering
\includegraphics[width=2.5in]{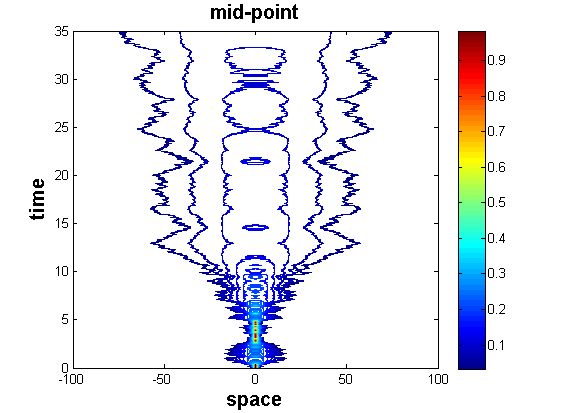}\\
\includegraphics[width=2.5in]{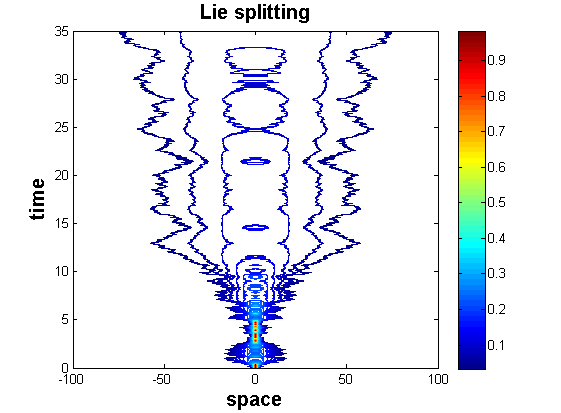}\\
\end{minipage}
\caption{The profile of the space-time evolution of $|u(t,x)|$ (left) and contour plot of $|u(t,x)|$ (right) for the stochastic multi-symplectic scheme \eqref{mid-nls} (up) and the stochastic symplectic splitting scheme \eqref{nspl} (down) in longtime.}
\label{fig-long}
\end{figure}

We now investigate the propagation of the solutions by our schemes. 
In this part we take $\delta x=0.05$ and $\delta t=2^{-12}$. The profiles of the amplitude $|u(t,x)|$ by the schemes are presented in Fig. \ref{fig-short}. 
This figure shows that dispersion and nonlinearity stay well balanced in a short time interval. 
Furthermore, Fig. \ref{fig-long} demonstrates the longtime behavior for these schemes.
We find that the amplitudes of  numerical 
solutions contract and expand alternatively,  and that the behavior of the numerical 
solutions is dominated by the dispersion after nearly $T=10$.     
The authors in \cite{BDD15} obtain similar numerical observations with a Crank-Nicolson scheme. 

 \begin{figure}
\subfigure{\includegraphics[width=.32\textwidth]{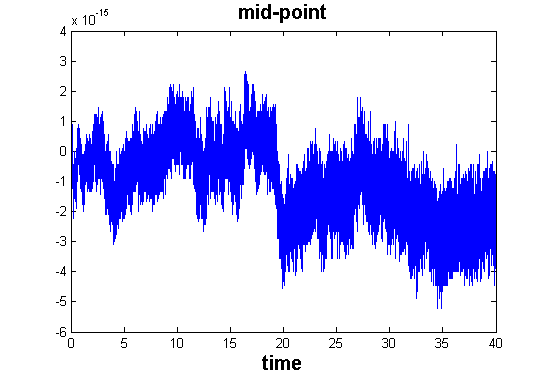}}
\subfigure{\includegraphics[width=.32\textwidth]{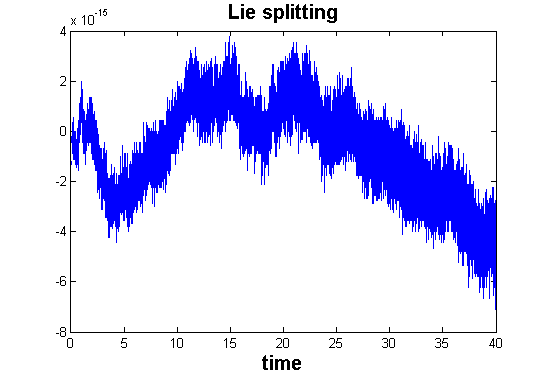}}
\subfigure{\includegraphics[width=.32\textwidth]{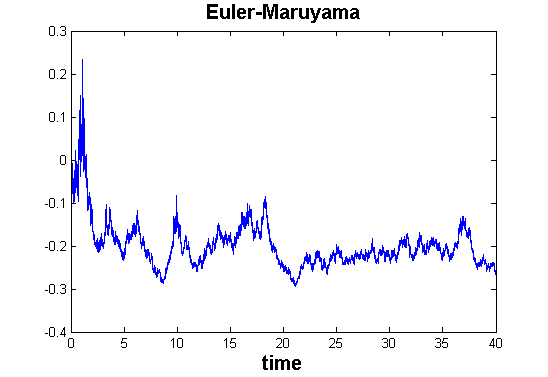}}
\caption
{The global errors of charge conservation law for mid-point scheme (left),  Lie splitting scheme (mid) and Euler-Maruyama scheme (right).}
\label{fig-L2}
\end{figure}

The preservation of the charge conservation law can be used to measure
the longtime behavior of the numerical scheme. For comparison with our stochastic symplectic and multi-symplectic schemes, we consider the non-symplectic Euler-Maruyama scheme:
\begin{align*}
&\bi\frac {u^{n+1}-u^n}{\delta t}+\frac {\delta W_n}{\delta t} \Delta u^{n}+g(u^{n})=0,
\end{align*}
with the same spatial discretization.
Fig. \ref{fig-L2} displays the evolution of global errors of the discrete charge conservation law
$\text{err}(n):= \delta x(\sum_{1\le j \le N_x} |u_{j}^{n}|^2-\sum_{1\le j \le N_x} |u_{j}^{0}|^2)$, $n\le N$ for the three schemes.
It turns out that the mid-point scheme \eqref{mid-nls} and the splitting scheme \eqref{nspl} both preserve the discrete charge 
conservation law exactly, which indicates that  the proposed two schemes have superiority in longtime computation
compared with the non-symplectic method.

\section{Conclusions}
\label{con}
In this paper, we focus on the stochastic NLS with white noise dispersion which is a representative stochastic Hamiltonian PDE. 
Based on the stochastic symplectic and stochastic multi-symplectic structures, we propose a symplectic splitting scheme and a multi-symplectic scheme whose temporal order are both one in probability. Moreover, the two schemes have good qualitative properties in longtime computations, and both preserve the discrete charge conservation law. 

\section*{Acknowledgments}
The authors gratefully thank the anonymous referees for valuable comments and suggestions in improving this paper. This work was supported by National Natural Science Foundation of China (No. 91630312, No. 91530118 and No. 11290142).

\bibliographystyle{plain}
\bibliography{bib}

\end{document}